\newcommand{\sfrac}[2]{{\textstyle\frac{#1}{#2}}}
\newcommand{\Ex}{\mathbb{E}}
\newcommand{\CC}{\mathbb{C}}
 \renewcommand{\Pr}{{\mathbb{P}}}
\newcommand{\PP}{\mathcal P}
\newcommand{\Reals}{{\mathbb{R}}}
\newcommand{\bX}{\mathbf{X}}
\newcommand{\bx}{\mathbf{x}}
\newcommand{\by}{\mathbf{y}}
 \newcommand{\eps}{\varepsilon}
\newtheorem{Lemma}{Lemma}
\newtheorem{Theorem}[Lemma]{Theorem}
\newtheorem{Proposition}[Lemma]{Proposition}
\newtheorem{Conjecture}[Lemma]{Conjecture}
 \newcommand{\qed}{\ \ \rule{1ex}{1ex}}
 \newcommand{\Bin}{\mathrm{Bin}}
 \newcommand{\ball}{\mathrm{ball}}
 \newcommand{\dist}{\mathrm{dist}}
 \newcommand{\support}{\mathrm{support}}
\begin{document}

\title{Markov chains and mappings of distributions on compact spaces II: Numerics and Conjectures}
 \author{David J. Aldous\thanks{Department of Statistics,
 367 Evans Hall \#\  3860,
 U.C. Berkeley CA 94720;  aldous@stat.berkeley.edu;
  www.stat.berkeley.edu/users/aldous.}
  \and 
  Madelyn Cruz\thanks{Department of Mathematics, University of Michigan; 
  mccruz@umich.edu}
    \and
  Shi Feng\thanks{Department of Mathematics, Cornell University;
  sf599@cornell.edu}
}

 \maketitle
 
 \begin{abstract}
 Consider a compact metric space $S$ and a pair $(j,k)$ with $k \ge 2$ and $1 \le j \le k$.
 For any probability distribution $\theta \in \PP(S)$, define a Markov chain on $S$ by:
 from state $s$, take $k$ i.i.d. ($\theta$) samples, and jump to the $j$'th closest. 
 Such a chain converges in distribution to a unique stationary distribution, say $\pi_{j,k}(\theta)$.
This defines a mapping $\pi_{j,k}: \PP(S) \to \PP(S)$.
What happens when we iterate this mapping?
In particular, what are the fixed points of this mapping? 
A few results are proved in a companion article \cite{mappings_short};  this article, not intended for formal publication,  records
numerical studies and conjectures.
 \end{abstract}

This document records our investigation into the novel topic described below.
Numerics and heuristics suggest a variety of general conjectures,
but what we can actually prove is seriously limited.
The rigorous results stated here (Theorems \ref{T:1}, \ref{P:k=2}, \ref{Conj}, \ref{P:0122}) are proved in an accompanying article \cite{mappings_short}.
This document is intended as informal discussion of a big picture. 
The introduction below expands on the introduction to \cite{mappings_short}.

\section{Introduction}
We start with informal background that led to the formulation of the questions to be studied.
If I suggest a specific compact metric space $(S,d)$ and ask you to define some ``interesting" mapping $f: S \to S$ then you can certainly do so.
However, if I challenge you to define a scheme which can be used in any ``generic" such space $S$, without exploiting any structure of the specific space, then that seems very difficult.
For instance you could define 
\[ f(s) := \arg \max_y d(s,y) \]
that is the most distant point from $s$; this works  for any space $S$ with the property that the most distant point is always unique.
Are there any more interesting general schemes?

Instead let us write $\PP(S)$ for the space of probability distributions on $S$. 
Consider the analogous challenge: define a scheme which can be used to specify a mapping $\PP(S) \to \PP(S)$ for a generic $S$.
In contrast to the case of mappings $S \to S$, there are perhaps many ways to do so.
In particular, there are schemes involving Markov chains.
For instance, given $\theta \in \PP(S)$ you can define a transition kernel $K(s,\cdot)$ for $s \in \support(\theta)$ by conditioning $\theta$
to the ball of radius $1$ around $s$:
\[ K(s,A) = \theta(A \cap \ball(s,1) )/\theta (\ball(s,1)) . 
\]
Under a simple connectivity condition\footnote{Essentially, that $\cup_{s \in \support(\theta)}  \ball (s,1)$ is connected.}  on $\theta$, the corresponding Markov chain has a unique stationary distribution, which we can call $\pi(\theta)$.
So this scheme defines a map $\pi:\PP_0(S) \to \PP(S)$ for every $S$ but only for the subset $\PP_0(S) \subseteq \PP(S)$ satisfying the connectivity condition. 
One could view this as a continuous-space analog of random walk on a graph.

In this project we study a scheme which works for {\bf all} compact metric spaces $S$ and {\bf all} of $\PP(S)$. 
Given $\theta \in \PP(S)$, one can consider the Markov chain that from 
point $s$ makes the step  to the nearer of $2$ random points drawn i.i.d. from $\theta$, breaking ties uniformly at random.
This scheme naturally generalizes as follows: fix $k \ge 2$ and $1 \le j \le k$, 
and step from $s$ to the $j$'th nearest of  $k$ random points drawn i.i.d. from $\theta$, again breaking ties uniformly at random. 
This defines a kernel $K^{\theta,j,k}$ on $S$.
Write the associated chain as $\bX^{\theta,j,k} = (X^{\theta,j,k}(t), t = 0, 1, 2, \ldots)$.
Theorem  \ref{T:1}  proves that this chain always has a unique stationary distribution,  which we can call $\pi_{j,k}(\theta)$.
So now we have defined a mapping $\pi_{j,k}: \PP(S) \to \PP(S)$ for every $S$.
Moreover Theorem  \ref{T:1}   proves that the distributions $\theta$ and $\pi_{j,k}(\theta)$ are mutually absolutely continuous, so in particular
have the same support.

These maps $\pi_{j,k}$ have apparently not been studied previously, even for special spaces $S$, so 
the purpose of this project is to initiate their study.
There is a range of problems one might consider.
The proof of Theorem  \ref{T:1}, using the natural coupling, does not yield any helpful explicit expression for the stationary distribution.
So one can ask
\begin{quote}
On a given space $S$, is there any informative description of  $\pi_{j,k}(\theta)$ in terms of $\theta$?
\end{quote} 
In this project our focus is different.
Given a mapping $\pi$ from a space to itself, it is mathematically natural to consider iterates
\[ \pi^{n+1}(\theta) = \pi(\pi^n(\theta)), n \ge 1. \]
In our setting it seems plausible that (at least for typical initial $\theta$) the iterates should converge to some fixed point, that is we expect
\begin{equation}
 \pi^n_{j,k}(\theta) \to_w \phi \mbox{ as } n \to \infty
 \label{pin}
 \end{equation}
and then we expect the limit to satisfy 
\begin{equation}
\pi_{j,k}(\phi) = \phi . 
\label{phiFP}
\end{equation}
We will call such a construction $(\pi^n_{j,k}(\theta), n \ge 0)$ the {\em iterative procedure}.
Note that this does not have any simple stochastic process interpretation, in contrast to the mapping $\theta \to \pi_{j,k}(\theta)$ derived from the Markov chain.

If $\phi$ satisfies (\ref{phiFP}) we call it a {\em fixed point} or an {\em invariant distribution} for $\pi_{j,k}$.
One can view this property as a kind of ``self-similarity under sampling" property.
The original motivation for this project was the hope that a typical compact metric space might have ``interesting" invariant distributions.
Our first attempt at finding such distributions was via numerically implementing the iterative procedure.
But in doing so, with initial distributions lacking symmetry, we observed that typically there is a limit but it is supported on only one or two points.
This seemed counter-intuitive, and prompted the further study of fixed points in this article.

Let us emphasize the trivial observation:

\medskip \noindent
{\bf On any $S$ and for any $(j,k)$, two types of measures are always invariant:
we call these the {\em omnipresent} measures.}
\begin{itemize}
\item The distribution $\delta_s$ degenerate at one point $s$;
\item The uniform two-point distribution 
$\delta_{s_1,s_2} = \frac{1}{2}(\delta_{s_1} + \delta_{s_2})$.
\end{itemize}

\medskip \noindent
Are there others?

\subsection{Formulating a program}
There are some subtleties involved in formulating a precise program, as follows.

(i) Convergence in (\ref{pin}) is weak convergence.
Recall that a Markov chain is a {\em Feller} chain if its kernel $K$ is weakly continuous:
\begin{equation}
  K(s_n,\cdot) \to_w K(s,\cdot) \mbox{ for all $s_n \to s$}.
  \label{def:Feller}
  \end{equation}
It is straightforward and well known (\cite{douc} section 12.3) that a Feller chain on a compact $S$ always has at least one stationary distribution.
Our chains  $\bX^{\theta,j,k}$ are not necessarily Feller, the obstacle arising when the chain involves ties -- see discussion in section \ref{sec:Feller}.
But it turns out that we do not need the Feller property for the foundational result, Theorem \ref{T:1}.
On the other hand one
might hope that the mapping  $\pi_{j,k}: \PP(S) \to \PP(S)$ is continuous, but we can only prove that it is continuous at (roughly speaking) distributions $\theta$ for which the Feller property holds -- 
see Proposition \ref{P:Feller}.
So convergence (\ref{pin}) does not automatically imply that the limit is a fixed point (\ref{phiFP}), though we do not know a counter-example.

(ii)
For given $\theta$ the iterates $\pi^n_{j,k}(\theta), n \ge 0$ all have the same support but we will see that the limit usually
has much smaller support,
so we cannot insist on limits having full support on $S$.  
On the other hand, any fixed point distribution over a space $S_0$ is automatically a fixed point distribution over a superspace $S_1 \supset S_0$, and to talk about 
``fixed points for $S_1$" we want the fixed point to have some relation with $S_1$.
So an appropriate precise problem formulation is

\begin{quote}
{\bf Program.}   (a) Given a compact metric space $(S,d)$ and given $(j,k)$, find all the fixed points of $\pi_{j,k}$ 
with full support on $S$. \\
(b) Given a compact metric space $(S,d)$ and given $(j,k)$, determine which invariant measures (not necessarily with full support
on $S$) 
arise as limits 
(\ref{pin}) of the iterative procedure from some initial $\theta$ with full support on $S$.
\end{quote}

\subsection{Toward a big picture}

This article reports on our extensive study via numerics of different various spaces $S$,
which provides the following heuristic ``big picture" of issues in the program above.

\medskip

(a) For $k = 2$, there are no invariant measures other than the omnipresent ones, except perhaps for
``exist by symmetry" ones; 
with that exception, for $j = 1, k = 2$ the iterates \eqref{pin} converge to some $\delta_s$,  
and for $j = 2, k = 2$ the iterates \eqref{pin} converge to some $\delta_{s_1,s_2}$. 
The precise limits $(s, s_1, s_2)$ may depend on the initial $\theta$.  In the case of 
$\delta_{s_1,s_2}$, the pair $(s_1,s_2)$ is a local maximum of $d(\cdot, \cdot)$.

(b) For larger $k$, for some types of space $S$ there are additional {\em sporadic} invariant measures; 
we don't see a pattern.

(c) For large $k$, as $j$ increases we see a transition, around $j/k = 0.7$, between convergence to some $\delta_s$ 
and convergence to some $\delta_{s_1,s_2}$.
However there seems no reason to believe that there is a universal value near $0.7$.

(d) Except for the omnipresent ones, all invariant measures $\phi$ that we have encountered 
are {\em unstable}, in that from any initial distribution of the form $\phi$ plus a generic (not symmetry-preserving) 
small perturbation, the iterates converge to some $\delta_s$ or  $\delta_{s_1,s_2}$.

We have succeeded in proving only a few small parts on this picture. 
The shorter article \cite{mappings_short} contains proofs of the following theorems, only stated in this document.

\begin{itemize}
\item Theorem \ref{T:1} is the Markov chain convergence result stated above.
\item Theorem \ref{P:k=2}: 
{\em For every  $S$, the set of invariant distributions for $\pi_{1,2}$ is the same as the set of invariant distributions for $\pi_{2,2}$.}  
This is surprising, in that  apparently (as in  (a) above) the iterates almost always converge to some $\delta_s$ for $\pi_{1,2}$,
but to some $\delta_{s_1,s_2}$ for $\pi_{2,2}$.
 \item Theorem \ref{Conj}:
  {\em There are no $\pi_{1,2}$ or $\pi_{2,2}$-invariant distributions on 
the space of finite binary tree leaves 
(see section \ref{sec:tree})
other than the omnipresent ones.}
\item Theorem \ref{P:0122}:
 {\em There are no $\pi_{1,2}$ or $\pi_{2,2}$-invariant distributions on 
 the interval $[0,1]$ other than the omnipresent ones.}
\end{itemize}

\subsection{Outline of this article}
Our main focus is on simulation results, leading to formulation of conjectures.  
We also provide technical comments.

\begin{itemize}
\item  Section \ref{sec:2}  serves to define the map $\pi_{j,k}$ (Theorem \ref{T:1}) and discusses the Feller property.
\item Section \ref{sec:symm} discusses the ``symmetry is preserved" feature of $\pi_{j,k}$, implying that some invariant distributions exist ``by symmetry", but also suggesting that
initial distributions with symmetry are atypical.
It also gives the counter-intuitive, though easily verified, result (Theorem \ref{P:k=2}) that the invariant distributions for $\pi_{2,2}$ coincide with 
the invariant distributions for $\pi_{1,2}$. 
\item Section \ref{sec:binomial} observes that even the 2-point case $S = \{a,b\}$ is interesting: as well as the obvious fixed points, for certain values of $(j,k)$ there are extra fixed points, but these 
appear (from numerics) to be {\em unstable} under the mapping $\pi_{j,k}$.
\item Section \ref{sec:finite} gives examples of non-uniform invariant distributions for finite spaces $S$.
We record the strong conjecture that, for finite $S$,  the iterative process from {\em almost all} initial distributions converges to 
one of the omnipresent limits; 
this is tantamount to saying that other fixed points are unstable. 
\item Section \ref{sec:tree} describes the {\em binary tree leaves} space, on which we can prove (Theorem \ref{Conj}) that for $k=2$ the 
only invariant distributions are the omnipresent ones.
\item The natural basic example of a continuous compact space is the unit interval, and we give a simulation study in section \ref{sec:interval}.
In examples we see convergence to limit distributions  supported on one or two points; we do not know whether (for any $(j,k)$) there exist invariant distributions with full support, though
we have partial results including Theorem \ref{P:0122}.
\item Section  \ref{sec:circle} studies the case of the circle, for which the uniform distribution is invariant.
Section \ref{sec:highdim} considers a high-dimensional case.  
Both are simulation studies.
\end{itemize}

\section{Existence and uniqueness of stationary distributions}
\label{sec:2}

\begin{Theorem}[\cite{mappings_short} Theorem 1]
\label{T:1}
Consider a compact metric space $(S,d)$ and a probability distribution $\theta \in \PP(S)$.
For each pair $1 \le j \le k, \ k \ge 2$, the Markov chain
$\bX^{\theta,j,k} = (X^{\theta,j,k}(t), t = 0, 1, 2, \ldots)$ 
has a unique stationary distribution $\pi_{j,k}(\theta)$.
From any initial point, the variation distance $D(t)$ between $\pi_{j,k}(\theta)$ and the distribution of $X^{\theta,j,k}(t)$ satisfies
\begin{equation}
 D(2t) \le (1 - 1/k^{k-1})^t, \quad 1 \le t < \infty 
 \label{VD}
 \end{equation}
and so there is convergence to stationarity in variation distance. 
Moreover, for $\pi = \pi_{j,k}(\theta)$ 
\begin{equation}
\theta^k(A) \le \pi(A) \le k \theta(A) , \ A \subseteq S 
\label{tkA}
\end{equation}
and so $\pi$ and $\theta$ are mutually absolutely continuous.
\end{Theorem}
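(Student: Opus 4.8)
The plan is to prove everything from the \emph{natural coupling}: run two copies of the chain, $X$ and $X'$, from arbitrary initial points (or with $X'$ started from a stationary version), and at every step feed both copies the \emph{same} $k$ i.i.d.\ samples $Y_1,\dots,Y_k$ from $\theta$, under a shared tie-breaking rule. Since each copy jumps to one of these shared samples, the two copies coalesce the moment they select the \emph{same} sample, and—because they continue to see identical samples thereafter—they stay coalesced forever. The coupling inequality then bounds $D(t)$ by the probability that coalescence has not yet happened, so the whole theorem reduces to a uniform lower bound on the per-step coalescence probability. If I can show that over any window of two steps the two copies coalesce with probability at least $1/k^{k-1}$, regardless of their current positions, then $D(2t)\le(1-1/k^{k-1})^t$ follows immediately; letting $t\to\infty$ shows that the law of $X^{\theta,j,k}(t)$ is Cauchy in total variation with a limit independent of the starting point, and that limit is the unique stationary $\pi_{j,k}(\theta)$, delivering existence and uniqueness simultaneously.

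The heart of the argument is the coalescence estimate, and this is where I expect the real work to be. By exchangeability of the i.i.d.\ samples, from any fixed state each of the $k$ samples is equally likely to be selected as the $j$-th nearest, so a single copy lands on a ``uniformly chosen'' sample index; the difficulty is that two copies sitting at different states $a$ and $b$ may systematically select \emph{different} indices. On a space such as the circle with $a,b$ nearly antipodal one sees that a single shared step need not force agreement with any positive probability bounded away from $0$, which is precisely why the estimate must be taken over two steps: after one shared step both copies necessarily occupy points of the common finite set $\{Y_1,\dots,Y_k\}$, and I would exploit this constraint, together with a configurational event on the $k$ samples of the second step, to force a common selection. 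The target constant $1/k^{k-1}$ should emerge by exhibiting one favorable pattern among at most $k^{k-1}$ equally likely sample-selection patterns and bounding its probability below uniformly in $(a,b)$ and in $\theta$—in particular without assuming $\theta$ atomless, so the bound cannot rely on distinct samples coinciding. Making this lower bound genuinely uniform over all pairs of states and all geometries of $S$ is the main obstacle.

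Finally, the sandwich \eqref{tkA} and the mutual absolute continuity are comparatively routine and do not require the coupling. Since the next state is always one of the $k$ samples, if every sample lands in $A$ then so does the selected one, while the selected sample lies in $A$ only if some sample does; hence for every $s$,
\[
\theta(A)^k \;\le\; K^{\theta,j,k}(s,A)\;\le\;\sum_{i=1}^{k}\Pr[Y_i\in A]\;=\;k\,\theta(A),
\]
using the union bound on the right. Integrating against the stationary law and invoking $\pi=\pi K^{\theta,j,k}$, i.e.\ $\pi(A)=\int K^{\theta,j,k}(s,A)\,\pi(ds)$, yields $\theta(A)^k\le\pi(A)\le k\,\theta(A)$ for all $A$, which is \eqref{tkA} (reading $\theta^k(A)$ as $\theta(A)^k$). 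Mutual absolute continuity follows at once: $\theta(A)=0$ forces $\pi(A)=0$ by the upper bound, and $\pi(A)=0$ forces $\theta(A)^k=0$ and hence $\theta(A)=0$ by the lower bound, so the two measures share exactly the same null sets.
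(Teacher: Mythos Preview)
This paper does not itself prove Theorem~\ref{T:1}; it defers to the companion article \cite{mappings_short} and records only that the argument uses ``the natural coupling''. Your framework therefore matches the intended method, and your derivation of \eqref{tkA} and of existence/uniqueness from a geometric total-variation bound is complete and correct.

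The only genuine gap is the one you yourself flag: the uniform two-step coalescence probability $\ge 1/k^{k-1}$. Your heuristic (``one favourable pattern among $k^{k-1}$ equally likely sample-selection patterns'') is not yet an argument, and as written the proposal does not establish the bound. The missing step is short. Write the first- and second-step samples as $Y_1,\dots,Y_k$ and $Z_1,\dots,Z_k$, and, with the shared tie-breaking randomness fixed, let $f:S\to\{1,\dots,k\}$ send $s$ to the index of the $j$-th closest among the $Z_i$. After step~$1$ both coupled copies lie in $\{Y_1,\dots,Y_k\}$, so on the event $E=\{f(Y_1)=\cdots=f(Y_k)\}$ they select the same $Z$-index at step~$2$ and coalesce, \emph{regardless of the initial pair of states}. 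Conditionally on the $Z_i$ and the tie-breaking, the $Y_i$ remain i.i.d.\ $\theta$; writing $p_\ell=\theta\{s:f(s)=\ell\}$ gives $\Pr(E\mid Z)=\sum_{\ell=1}^{k}p_\ell^{\,k}\ge k\,(1/k)^k=k^{1-k}$ by convexity of $x\mapsto x^k$ on the simplex $\sum_\ell p_\ell=1$. The analogous events $E_1,E_2,\dots$ on successive two-step blocks depend on disjoint fresh samples and are therefore independent with $\Pr(E_m)\ge k^{1-k}$; since non-coalescence by time $2t$ forces $E_1^c\cap\cdots\cap E_t^c$, this yields \eqref{VD}.
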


{\bf Remarks.}
(a) Note that the bound on variation distance depends only on $k$.

(b) The variation distance bound (\ref{VD}) is exponentially decreasing in time, but it is more natural to consider {\em mixing time} in the sense of  \cite{MCMC}. 
The example of the uniform distribution $\theta$ on a  2-point space with $j = 1$ shows that the mixing time as a function of $k$ 
can be order $2^k$.

(c) As mentioned earlier, the proof of Theorem \ref{T:1} does not say anything about $\pi_{j,k}(\theta)$ except (\ref{tkA}).
We do not know if there are  informative analytic descriptions of $\pi_{j,k}(\theta)$  in terms of $\theta$.

\subsection{Concerning the Feller property}
\label{sec:Feller}
As an example to show that the Markov chains $\bX^{\theta,j,k}$ are not necessarily Feller,
consider $S = [0,1]$ and $\theta = \frac{1}{3}(\delta_0 + \delta_1 + \lambda)$
for Lebesgue measure $\lambda$.
Here the requirement (\ref{def:Feller}) that $K(x_i,\cdot) \to_w K(\frac{1}{2}, \cdot)$ as $x_i \to \frac{1}{2}$ fails.

This difficulty occurs because there are ties in the distances to the points 
$(Y_i, 1 \le i \le k)$ sampled i.i.d. from $\theta$.
Suppose, for given  $\theta \in \PP(S)$ 
and given $s \in S$,
\begin{equation}
d(s,Y_i) \mbox{ has non-atomic distribution} .
\label{Fel:1}
\end{equation}
This of course cannot hold for finite $S$.
Property (\ref{Fel:1}) implies
\begin{equation}
\Pr( d(s,Y_i) = d(s,Y_j)) = 0 \mbox{ for } j \neq i.
\label{Fel:2}
\end{equation}
It is straightforward to show that, if (\ref{Fel:2}) holds for all $s \in S$, 
then the Markov chain $\bX^{\theta,j,k}$ is Feller.
Let us consider the slightly weaker property: for given $\theta \in \PP(S)$
\begin{equation}
d(s,Y_i) \mbox{ has non-atomic distribution for $\theta$-almost all } s \in S.
\label{Fel:3}
\end{equation}
Note this is equivalent to
\begin{equation}
\Pr(d(Y_1,Y_2) = d(Y_1,Y_3)) = 0 .
\label{Fel:3a}
\end{equation}
We can now give a partial continuity property for the maps $\pi_{j,k}$.
\begin{Proposition}
\label{P:Feller}
If $\theta \in \PP(S)$ has property (\ref{Fel:3}) then
$\pi_{j,k}(\theta_n) \to_w \pi_{j,k}(\theta)$
for all $\theta_n \to_w \theta$.
\end{Proposition}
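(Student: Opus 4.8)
The plan is to combine the uniform rate of convergence to stationarity from Theorem~\ref{T:1} with a finite-horizon continuity statement, via a standard three-$\eps$ argument. Throughout fix $(j,k)$, abbreviate $K^\eta := K^{\eta,j,k}$ and $\mu K^\eta := \int_S K^\eta(s,\cdot)\,\mu(ds)$, and metrize weak convergence on the compact space $\PP(S)$ by a metric $\rho$ dominated by total variation, $\rho(\mu,\nu)\le\|\mu-\nu\|_{TV}$ (e.g.\ the bounded-Lipschitz metric). Write $\pi_n=\pi_{j,k}(\theta_n)$ and $\pi=\pi_{j,k}(\theta)$. Since the bound \eqref{VD} depends only on $k$ and holds from any initial point (hence, by convexity of total variation, from any initial distribution), for every $n$ and $t$ we have $\|\theta_n(K^{\theta_n})^{2t}-\pi_n\|_{TV}\le(1-1/k^{k-1})^t$, and likewise with $\theta$ in place of $\theta_n$. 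In the decomposition
\begin{equation*}
\rho(\pi_n,\pi)\le\|\theta_n(K^{\theta_n})^{2t}-\pi_n\|_{TV}+\rho\big(\theta_n(K^{\theta_n})^{2t},\,\theta(K^{\theta})^{2t}\big)+\|\theta(K^{\theta})^{2t}-\pi\|_{TV}
\end{equation*}
the outer terms are at most $(1-1/k^{k-1})^t$ uniformly in $n$, so the whole problem reduces to proving that, for each fixed integer $m$,
\begin{equation*}
\theta_n(K^{\theta_n})^{m}\to_w\theta(K^{\theta})^{m}\quad\text{as }n\to\infty,
\end{equation*}
which we then invoke with $m=2t$; choosing $t$ large and then $n$ large finishes the argument.

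First I would establish one-step continuity: if $\mu_n\to_w\mu$ and $\theta_n\to_w\theta$ with $\mu\ll\theta$, then $\mu_n K^{\theta_n}\to_w\mu K^{\theta}$. Encode the step by the selection map $\Phi(s,y_1,\dots,y_k,u)$ returning the $j$-th closest of $y_1,\dots,y_k$ to $s$, with ties broken by an auxiliary variable $u$, so that $\mu_n K^{\theta_n}$ is the law of $\Phi(X_0^{(n)},Y_1^{(n)},\dots,Y_k^{(n)},U)$ for independent $X_0^{(n)}\sim\mu_n$, $Y_i^{(n)}\sim\theta_n$, and $U$ uniform. Since $\mu_n\otimes\theta_n^{\otimes k}\to_w\mu\otimes\theta^{\otimes k}$ on the compact product space, the Skorokhod representation supplies a common probability space on which $(X_0^{(n)},Y^{(n)})\to(X_0,Y)$ almost surely, the limit having law $\mu\otimes\theta^{\otimes k}$ (take the same $U$ for all $n$). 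The map $\Phi$ is continuous at every point where the distances $d(s,y_1),\dots,d(s,y_k)$ are pairwise distinct, because a strict ordering of distances persists under small perturbations and then the tie-breaker $u$ plays no role. By \eqref{Fel:2}, which follows from \eqref{Fel:3}, the tie set is $\theta^{\otimes k}$-null for $\theta$-a.a.\ $s$; since $\mu\ll\theta$, Fubini makes it null under $\mu\otimes\theta^{\otimes k}$. Hence $\Phi$ is continuous at the almost sure limit, so $\Phi(X_0^{(n)},Y^{(n)},U)\to\Phi(X_0,Y,U)$ a.s., giving $\mu_n K^{\theta_n}\to_w\mu K^{\theta}$.

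I would then iterate. The crucial observation is that any measure obtained by one step stays absolutely continuous with respect to the sampling measure: for any $\nu$, $(\nu K^{\theta})(A)=0$ whenever $\theta(A)=0$, since the selected point is always one of the $\theta$-distributed samples $Y_i$. Consequently $\theta(K^{\theta})^m\ll\theta$ for every $m$, so \eqref{Fel:3} remains available at each iterate. Starting from $\mu_n^{(0)}=\theta_n\to_w\theta=\mu^{(0)}$ with $\mu^{(0)}\ll\theta$, the one-step result gives $\theta_n K^{\theta_n}\to_w\theta K^{\theta}$ with $\theta K^{\theta}\ll\theta$; repeating, an induction on $m$ yields $\theta_n(K^{\theta_n})^m\to_w\theta(K^{\theta})^m$ for every fixed $m$. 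Note that the approximating measures $\theta_n(K^{\theta_n})^\ell$ need no regularity at all: the continuous-mapping step requires $\Phi$ to be continuous only at the \emph{limit}, which is governed solely by the non-atomicity of $\theta$.

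The hard part, and the only place \eqref{Fel:3} enters, is the continuity of the selection map $\Phi$: without a no-ties condition the $j$-th nearest point can jump discontinuously and the random tie-breaking makes the kernel genuinely discontinuous (precisely the failure exhibited by $\theta=\frac13(\delta_0+\delta_1+\lambda)$). The two ingredients that neutralize this are \eqref{Fel:3}, which forces ties to be null under the limit law, and the propagation $\theta(K^{\theta})^m\ll\theta$, which keeps \eqref{Fel:3} applicable along the whole finite orbit; the uniform-in-$\theta$ mixing bound \eqref{VD} then does the rest, trading the infinite horizon for a finite one.
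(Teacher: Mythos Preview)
Your proof is correct and follows essentially the same architecture as the paper's: reduce to finite-horizon continuity via the uniform mixing bound \eqref{VD}, then establish that $\theta_n(K^{\theta_n})^m\to_w\theta(K^{\theta})^m$ by induction on $m$, using at each step that the limit iterate stays absolutely continuous with respect to $\theta$ (the paper records this as $\dist(X(t))\le k\theta$) so that the no-ties condition \eqref{Fel:3} remains in force. The only difference is presentational: the paper writes out the one-step coupling by hand with explicit Wasserstein-type bounds and an $\eps$-argument for the case $(j,k)=(1,2)$, whereas you package the same idea via Skorokhod representation and the continuous mapping theorem applied to the selection map $\Phi$, which is a little cleaner and treats general $(j,k)$ uniformly.
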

\begin{proof}
Consider  $\theta_n \to_w \theta$.
Write $(X(t), t =0, 1, 2, \ldots)$ for  the chain $\bX^{\theta,j,k}$ started with distribution $\theta$ 
and write $(X^n(t), t =0, 1, 2, \ldots)$ for  the chain $\bX^{\theta_n,j,k}$ started with distribution $\theta_n$.
It is sufficient to prove that, for fixed $t$, we have convergence in distribution
\begin{equation}
 \dist( X^n(t))  \to_w \dist( X(t))  \mbox{ as } n \to \infty 
 \label{Fel:4}
 \end{equation}
because then the result will follow from (\ref{VD}).
Proving this would make a nice exercise in a course on weak convergence.
We will write out the case $(j,k) = (1,2)$: the general case is similar.

We prove (\ref{Fel:4}) by induction on $t$.  
Fix $t$, assume (\ref{Fel:4}) for that $t$, and recall that by construction
\begin{equation}
\dist( X(t)) \le k \theta .
\label{ktheta}
\end{equation}
There exist couplings $(Y_i,Y	_i^n)$ of $\theta$ and $\theta_n$ such that
\[
a(n) := \Ex d(Y_i,Y_i^n) \to 0 \mbox{ as } n \to \infty
\]
and there exist couplings $(X(t), X^n(t))$ of $\phi$ and $\phi_n$ such that
\[
b(n) := \Ex d(X(t),X^n(t)) \to 0 \mbox{ as } n \to \infty .
\]
We have
\begin{eqnarray*}
X(t+1) &= & Y_1 \mbox{ if } d(X(t), Y_1) < d(X(t), Y_2) \\
	  &= & Y_2 \mbox{ if } d(X(t), Y_1) > d(X(t), Y_2) 
 \end{eqnarray*}
 and also the case of ties.
 Combining with the analog for $X^n(t+1)$ we have
\begin{eqnarray*}
d(X(t+1),X^n(t+1)) &=& d(Y_1, Y^n_i) \mbox{ on } A_n \\
		A_n& :=& \{ d(X(t), Y_1) < d(X(t), Y_2) \mbox{ and } d(X^n(t), Y^n_1) < d(X^n(t), Y^n_2) \} \\
d(X(t+1),X^n(t+1)) &=& d(Y_2, Y^n_2) \mbox{ on } B_n \\
		B_n &:=& \{ d(X(t), Y_1) > d(X(t), Y_2) \mbox{ and } d(X^n(t), Y^n_1) > d(X^n(t), Y^n_2) \} 
\end{eqnarray*}
Now $A_n$ contains the event $A_n(\eps)$, where for $\eps > 0$
\[
 A_n(\eps) :=  \{ d(X(t), Y_1) < d(X(t), Y_2) - 3 \eps; \ d(X(t),X^n(t)) \le \eps; \ d(Y_2,Y^n_2) \le \eps \}
\]
and $B_n$ contains an analogous event $B_n(\eps)$.
Taking account of the remaining cases, we obtain
\[
\Ex d(X(t+1),X^n(t+1)) \le 2 a(n) + \Delta \Pr( (A_n(\eps)  \cup B_n(\eps))^c)
\]
where $\Delta < \infty$ is the diameter of $S$.
Using Markov's inequality to bound the probability of events like 
$\{ d(X(t),X^n(t)) > \eps\}$,
we obtain
\[
\Ex d(X(t+1),X^n(t+1)) \le 2 a(n) + \Delta \left( \sfrac{2(a(n)+b(n))}{\eps} + \Pr(  \vert d(X(t), Y_1) -  d(X(t), Y_2) \vert \le 3 \eps      \right) .
\]
Letting $n \to \infty$ we see that it suffices to prove
\[  \Pr \left(  \vert d(X(t), Y_1) -  d(X(t), Y_2) \vert \le 3 \eps      \right) \to 0 \mbox{ as } \eps \to 0 \]
but this follows from (\ref{Fel:2}) and (\ref{Fel:4}).
\qed
\end{proof}

\section{General remarks about fixed points of the mapping}
\label{sec:symm}
On every compact metric space $S$ we have an obvious ``preservation of symmetry" result
\begin{Lemma}
\label{L:sym}
If $\theta \in \PP(S)$ is invariant under an isometry $\iota$ of $S$ then $\pi_{j,k}(\theta)$ is also invariant under $\iota$.
\end{Lemma}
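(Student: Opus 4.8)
The plan is to exploit the fact that the entire construction of the kernel $K^{\theta,j,k}$ is \emph{equivariant} under $\iota$, and then to invoke the uniqueness half of Theorem \ref{T:1}. Throughout, write $\iota_* \mu$ for the pushforward measure $\mu \circ \iota^{-1}$, so that $\theta$ being $\iota$-invariant means $\iota_* \theta = \theta$. I first note that a distance-preserving self-map of a compact metric space is automatically a bijection, so $\iota$ is a homeomorphism and all pushforwards below are well defined and Borel.

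The key step is to establish the equivariance identity for the kernel,
\[ K^{\theta,j,k}(\iota s, \iota A) = K^{\theta,j,k}(s, A), \qquad s \in S, \ A \subseteq S \text{ measurable.} \]
The reason is that the jump rule depends on the sample $(Y_1,\dots,Y_k)$ only through the distances $d(s,Y_i)$. Applying $\iota$ to the current state and the samples simultaneously leaves every such distance unchanged, since $d(\iota s, \iota Y_i) = d(s, Y_i)$; hence the (possibly tied) set of indices achieving the $j$'th smallest distance is identical in the two pictures, and the uniform tie-break selects the same index in distribution. Since $\iota_* \theta = \theta$, the transported sample $(\iota Y_1, \dots, \iota Y_k)$ is again i.i.d.\ $\theta$, so the jump from $\iota s$ using a genuine $\theta$-sample is exactly the $\iota$-image of the jump from $s$. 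Integrating over the sample yields the displayed identity.

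Next I would show that $\iota_* \pi$ is stationary for the same kernel, where $\pi = \pi_{j,k}(\theta)$. Using the change-of-variables formula for the pushforward and then the equivariance identity (with $B = \iota^{-1} A$), the computation runs
\[ (\iota_* \pi) K^{\theta,j,k}(A) = \int_S K^{\theta,j,k}(\iota s, A)\, d\pi(s) = \int_S K^{\theta,j,k}(s, \iota^{-1} A)\, d\pi(s) = \pi(\iota^{-1} A) = (\iota_* \pi)(A), \]
where the third equality uses the stationarity $\pi K^{\theta,j,k} = \pi$. Thus $\iota_* \pi$ is a stationary distribution for $K^{\theta,j,k}$. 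But Theorem \ref{T:1} asserts this kernel has a \emph{unique} stationary distribution, so $\iota_* \pi = \pi$; that is, $\pi_{j,k}(\theta)$ is $\iota$-invariant, as claimed.

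The argument is essentially formal, and the only step requiring genuine care is the equivariance identity — specifically, checking that the randomized tie-breaking rule really does commute with $\iota$. Because $\iota$ is a bijection, it carries the set of tied optimal candidates bijectively onto the corresponding set in the transported picture, so a uniform choice among the former maps to a uniform choice among the latter. This is worth stating explicitly rather than glossing over, since it is the one place where the tie-breaking convention, rather than merely the distances, enters the argument.
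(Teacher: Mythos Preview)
Your argument is correct and is exactly the natural one: equivariance of the kernel under $\iota$ combined with the uniqueness assertion of Theorem~\ref{T:1}. The paper does not actually give a proof of Lemma~\ref{L:sym}; it simply introduces it as an ``obvious preservation of symmetry result,'' so your write-up supplies what the authors left implicit, including the care you take over tie-breaking and over $\iota$ being a bijection.
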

This has several implications.

\subsection{Fixed points existing by symmetry}
\label{sec:symmetry}
In some cases there are distributions $\phi \in \PP(S)$ which are invariant (that is, fixed points)  ``by symmetry".
In particular, the omnipresent examples mentioned earlier:

\noindent
(i) The distribution $\delta_s$ degenerate at one point $s$;

\noindent
(ii)  The uniform two-point distribution 
$\delta_{s_1,s_2} = \frac{1}{2}(\delta_{s_1} + \delta_{s_2})$;

\smallskip \noindent
But there are further examples: 

\noindent
(iii) The Haar probability measure on a compact group $S$ with a metric invariant under the group action.

\noindent
(iv) On a finite space $S$,
a sufficient condition for the uniform distribution to be invariant is that 
$S$ is {\em transitive}, that is if for each pair $s, s^\prime$ there is an isometry taking $s$ to $s^\prime$. 
This is equivalent to the finite case of Haar measure.
But for finite $S$ a weaker condition suffices, because all that matters is the {\em rank matrix} -- see section \ref{sec:non-uniform}.

Now in those cases the distribution is invariant for all $\pi_{j,k}$.
So the question becomes: 
 for a particular $(j,k)$, are there invariant distributions with full support, other than those ``forced by symmetry" as above?


\subsection{Symmetry in the limit}
If $\theta  \in \PP(S)$ is invariant under an isometry $\iota$, then by Lemma \ref{L:sym} each iterate $\pi_{j,k}^n(\theta)$ is invariant 
and so we expect (recall that we do not know that $\pi_{j,k}$ is continuous) that a limit $\phi$ will be invariant.
This observation will be relevant for the formulation of conjectures.

\subsection{Fixed points for $\pi_{1,2}$ and $\pi_{2,2}$}
Perhaps the basic example of a compact space is the unit interval $[0,1]$, and for that space we will give a detailed simulation study of the iterative process in section \ref{sec:interval}.
Figure \ref{Fig:U122} 
shows the observed behavior of iterates of $\pi_{1,2}$ and $\pi_{2,2}$ from the  initial uniform distribution.
As observed above, limits of such iterates should be invariant under the reflection isometry $x \to 1-x$.
We see the former converging to $\delta_{1/2}$ and the latter to $\delta_{0,1}$.
We believe this qualitative difference will hold very generally --- see the related  Conjectures \ref{Con:12} and \ref{C:Euclid}. 
So the following general result seems very counter-intuitive.

 \begin{figure}[ht]
\includegraphics[width=2.3in]{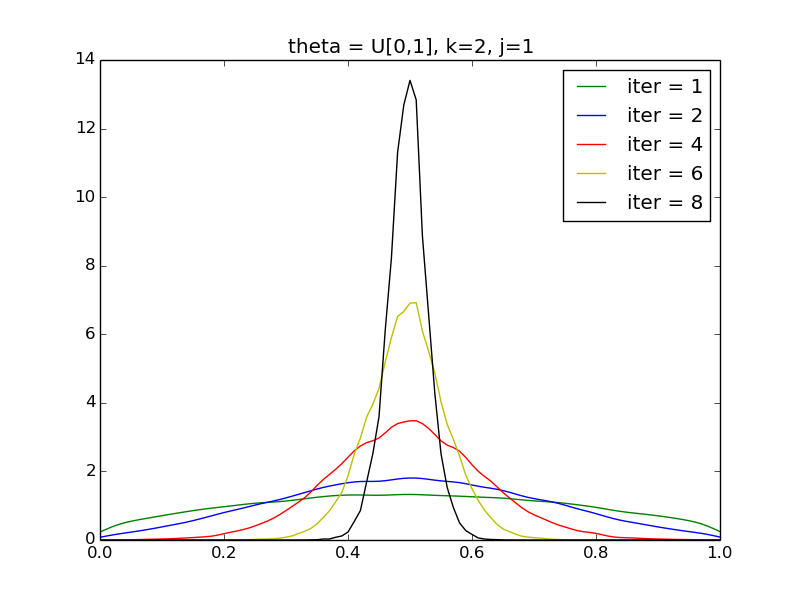}   \hspace*{-0.2in}
\includegraphics[width=2.3in]{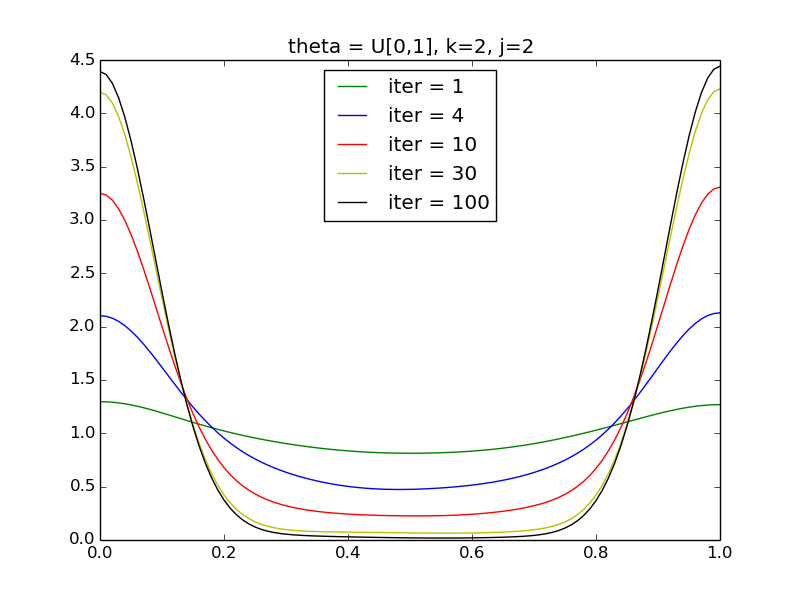} 
\caption{Iterates of $\pi_{1,2}$ (left) and $\pi_{2,2}$ (right) on the unit interval from uniform initial distribution.}
\label{Fig:U122}
\end{figure}

\begin{Theorem}[\cite{mappings_short} Theorem 3]
\label{P:k=2}
For every compact metric space $S$, the set of invariant distributions for $\pi_{1,2}$ is the same as the set of invariant distributions for $\pi_{2,2}$.
\end{Theorem}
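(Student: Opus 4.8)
The plan is to establish a single pointwise identity between the two transition kernels and then read off the equality of fixed-point sets from the uniqueness assertion in Theorem \ref{T:1}. The key observation is that, for $k=2$, the ``nearest'' point and the ``farthest'' point of two i.i.d.\ samples are precisely the two samples themselves, merely relabelled; the map $j=1$ and the map $j=2$ therefore partition the same pair of points between them.

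First I would fix $\phi \in \PP(S)$ and $s \in S$, draw $Y_1,Y_2$ i.i.d.\ from $\phi$, and let $N_s$ and $F_s$ denote the nearest and farthest of $\{Y_1,Y_2\}$ from $s$. Using the natural joint construction in which a single uniform tie-break assigns one sample to ``nearest'' and the complementary sample to ``farthest'', the unordered pair $\{N_s,F_s\}$ equals the multiset $\{Y_1,Y_2\}$ for \emph{every} realization: verbatim when the two distances differ, by the symmetric tie rule when $d(s,Y_1)=d(s,Y_2)$, and trivially when $Y_1=Y_2$. Hence for every $A \subseteq S$,
\[
\mathbf{1}_{\{N_s \in A\}} + \mathbf{1}_{\{F_s \in A\}} = \mathbf{1}_{\{Y_1 \in A\}} + \mathbf{1}_{\{Y_2 \in A\}}
\]
pointwise (one may alternatively verify this in conditional expectation given $(Y_1,Y_2)$, which sidesteps any coupling concern). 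Taking expectations yields the kernel identity
\[
K^{\phi,1,2}(s,A) + K^{\phi,2,2}(s,A) = 2\phi(A), \qquad s \in S,\ A \subseteq S .
\]

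Next I would integrate this identity against an arbitrary $\nu \in \PP(S)$. Writing $\nu K$ for the measure $A \mapsto \int K(s,A)\,\nu(ds)$, and using that the right-hand side does not depend on $s$ while $\nu$ is a probability measure, we get
\[
\nu K^{\phi,1,2} + \nu K^{\phi,2,2} = 2\phi \qquad \text{for every } \nu \in \PP(S) .
\]
Now suppose $\phi$ is a fixed point of $\pi_{1,2}$. By the definition of $\pi_{j,k}(\theta)$ as the unique stationary distribution of $\bX^{\theta,j,k}$, this says exactly that $\phi$ is stationary for $K^{\phi,1,2}$, i.e.\ $\phi K^{\phi,1,2} = \phi$. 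Taking $\nu=\phi$ in the displayed identity and substituting gives $\phi + \phi K^{\phi,2,2} = 2\phi$, hence $\phi K^{\phi,2,2} = \phi$; so $\phi$ is stationary for $K^{\phi,2,2}$, and by the uniqueness in Theorem \ref{T:1} we conclude $\pi_{2,2}(\phi)=\phi$. The reverse implication is identical, the identity being symmetric in the two kernels, so the two sets of fixed points coincide.

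I do not expect a serious obstacle here: once the kernel identity is in hand the remainder is a one-line linear manipulation on measures. The only points demanding care are the correct treatment of ties in defining $N_s$ and $F_s$ (the uniform, symmetric tie rule is exactly what makes the multiset identity hold), and the bookkeeping that ``$\phi$ is a fixed point of $\pi_{1,2}$'' is synonymous with ``$\phi$ is stationary for $K^{\phi,1,2}$''. Notably, no continuity or Feller property is invoked, which explains why the conclusion holds on \emph{every} compact metric space; the counter-intuitive flavour noted after the statement comes entirely from the contrast between this shared invariant set and the divergent \emph{dynamical} behaviour of the two iterative procedures.
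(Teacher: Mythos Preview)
Your argument is correct: the kernel identity $K^{\phi,1,2}(s,\cdot)+K^{\phi,2,2}(s,\cdot)=2\phi(\cdot)$ is the key observation, and once it is established the rest is a one-line linear manipulation together with the uniqueness in Theorem~\ref{T:1}. The present paper does not actually give a proof here---it defers to the companion article \cite{mappings_short}---but it describes the result as ``counter-intuitive, though easily verified,'' and your kernel-sum identity is precisely the natural verification; there is no reason to expect the companion's argument to differ in any essential way.
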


\section{Two elements -- the binomial case} 
\label{sec:binomial}
One might suppose that the case of a 2-element set $S = \{a,b\}$ would be trivial, but it is not.
Parametrizing a distribution $\theta$ on $S$ by $p := \theta(a)$, we view the mapping $\pi_{j,k}: \PP(S) \to \PP(S)$  as a mapping $\pi_{j,k}: [0,1] \to [0,1]$ defined as follows.
In the associated 2-state Markov chain, the transition probabilities are
\[ k_{a,b} = \Pr(\Bin(k,p) < j); \quad k_{b,a} = \Pr(\Bin(k,p) > k - j) \]
for Binomial random variables.  From the stationary distribution we find 
\[
\pi_{j,k}(p) = \frac{\Pr(\Bin(k,p) > k - j) } {\Pr(\Bin(k,p) > k - j) + \Pr(\Bin(k,p) < j)   }  .
\]
So a fixed point is a solution of the equation
\begin{equation}
\pi_{j,k}(p) = p .
\label{Bin:FP}
\end{equation}
We know by symmetry that $p = 0, p = 1/2, p =1$ are fixed points; are there others?
By symmetry it is enough to consider $0 < p < 1/2$.

We have not tried to find solutions analytically, but we will show results of numerical calculations of the iterates  $ \pi^n_{j,k}(p), n = 1,2,3, \ldots$.
For a given $(k,j)$, we observe three possible types of qualitative behavior: 

\noindent
(i) $ \pi^n_{j,k}(p) \to 0 \mbox{ as } n \to \infty, \mbox{ for all }  0 < p < 1/2$.

\noindent
(ii)  $ \pi^n_{j,k}(p) \to 1/2 \mbox{ as } n \to \infty, \mbox{ for all }  0 < p < 1/2$.

\noindent
(iii)  There exists a critical value $p_{crit} \in (0,1/2)$ such that 

$p_{crit}$ is invariant 

and $ \pi^n_{j,k}(p) \to 0 \mbox{ as } n \to \infty, \mbox{ for all }  0 < p < p_{crit}$ 

and  $ \pi^n_{j,k}(p) \to 1/2 \mbox{ as } n \to \infty, \mbox{ for all }  p_{crit}  < p < 1/2$.

\noindent
For us,  (iii) is the interesting  case.
It first arises with $k = 5, j = 4$, as shown in Figure \ref{Fig:2-point,k=5,j=4}.  
We see the critical value $p_{crit} = 0.17267...$.
We also see this is an {\em unstable} fixed point.

 \begin{figure}[ht]
\includegraphics[width=2.5in]{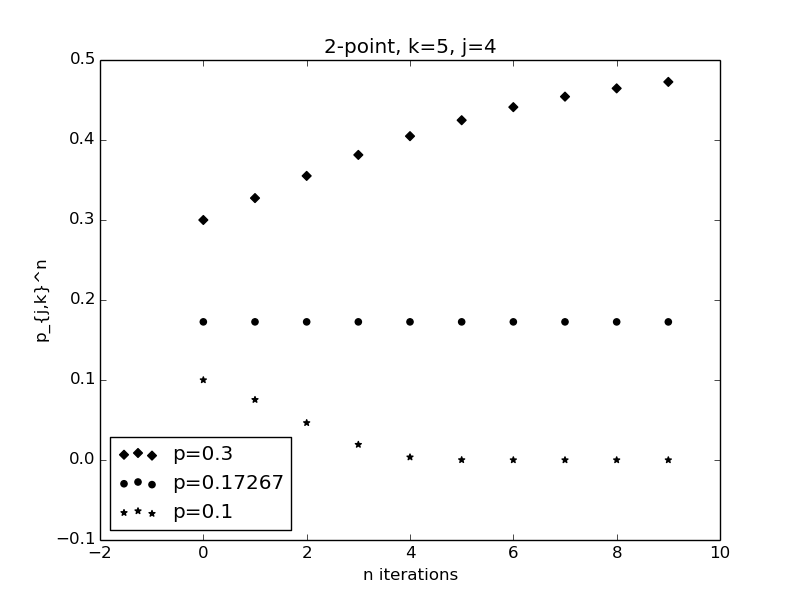}
\includegraphics[width=2.5in]{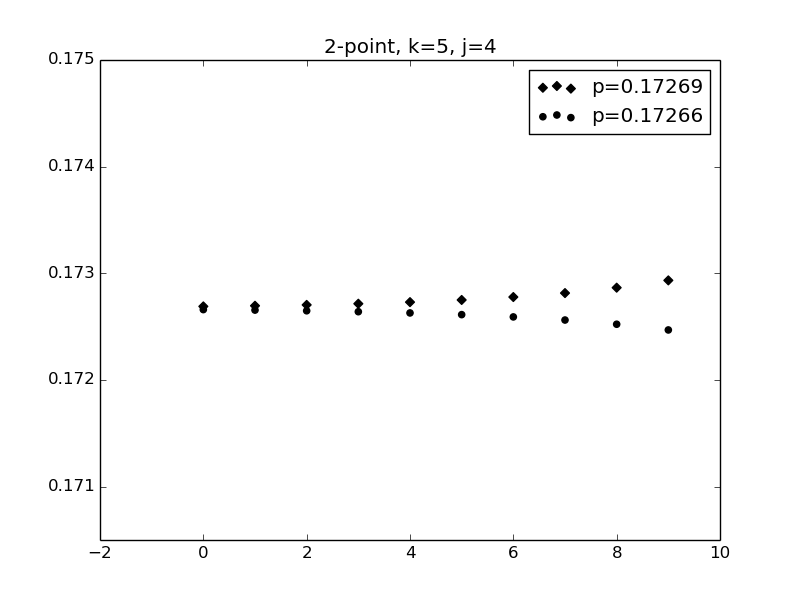}
\caption{$S = \{a,b\}; \ k = 5, j = 4$.  Iterates $n = 0,1,2,\ldots,10$. Left panel shows type (iii) behavior, Right panel shows the unstable fixed point at 0.17267.}
\label{Fig:2-point,k=5,j=4}
\end{figure}

\begin{table}
\caption{$S = \{a,b\}$ and $2 \le k \le 9$.   The values of $j$ with each type of behavior, and (critical values) in type (iii) behavior.}
\begin{center}
$\begin{array}{cccc}
k & (i) & (iii) & (ii) \\
\hline 
2 & 1 && 2 \\
3 & [1,2] && 3\\
4 & [1 - 3] & & 4 \\
5 & [1 - 3] & 4\  (0.17267) & 5\\
6 & [1 - 4] & 5\  (0.09558) & 6 \\
7 & [1 - 5] & 6\  (0.06276) & 7\\
8 & [1 - 5] & 6\  (0.26405) & [7,8] \\
9 & [1 - 6] & 7\  (0.18884); \ \  8\  (0.03364) &  9 
\end{array}$
\end{center}
\label{Table:1}
\end{table}

Table \ref{Table:1} shows the type of behavior -- types (i) or (ii) or (iii) above -- for all pairs $(j,k)$ with $k \le 9$.
One take-away message is that for $S = \{a,b\}$ there exist some $(j,k)$ for which $\pi_{j,k}$ has fixed points in addition to those existing by symmetry, but these fixed points are unstable.
Another take-away message is that, except for these additional fixed points, the limit of the iterative process is either $\delta_{a}$ or
$\delta_{b}$ or $\delta_{a,b}$,  the latter only when $j/k$ is somewhat close to $1$.
Of course the 2-point space may be very special.
What properties extend to other $S$?

  \section{Finite spaces}
  \label{sec:finite}
 \subsection{Non-uniform invariant distributions}
 \label{sec:non-uniform}
 In our initial investigations, via simulations of the iterative process on different spaces, we found that
 the iterates always converged to  one- or two-point support invariant distributions, 
 $\delta_s$ or $\delta_{s_1,s_2}$.  
 As mentioned before, on a space with sufficient symmetry the uniform distribution will be invariant. 
 So perhaps the existence of non-uniform invariant distributions  on the 2-element space for $k \ge 5$ is an anomaly?
 These observations were consistent with the possibility that {\bf all} invariant distributions supported on more than two elements are uniform on their support.
 But a numerical search revealed the following counter-example, in the simplest case $j = 1, k = 2$.

 A finite metric space can be represented by the matrix $D$ of distances $d(i,j)$.
 By taking all the non-zero distances to be between $1$ and $2$, the triangle inequality is automatically satisfied.
 Consider the example of a 5-element space with distance matrix
 \[ D = 
 \begin{pmatrix}
 0 &   1.714 &  1.341 &  1.656 & 1.74 \\
 1.714 & 0 &   1.298  & 1.794 & 1.03 \\
 1.341 & 1.298  &0&    1.715  & 1.844\\
 1.656 & 1.794 & 1.715 &0  &   1.524\\
 1.74 & 1.03 & 1.844 & 1.524 & 0   
 \end{pmatrix}
 \]
What matters for our purposes, assuming as in this example that all distances are distinct\footnote{Precisely, all distances $d(i,j), 1 \le i < j \le n$ are distinct.},  is the {\em rank matrix} $R$, where $r(i,j) = 4$ means that $d(i,j)$ is the 4'th smallest of $\{d(i,1), d(i,2), \ldots, d(i,|S|)\}$.
 For the distance matrix $D$ above,  the rank matrix is
  \[ R = 
 \begin{pmatrix}
 1 & 4 & 2 & 3 & 5 \\
 4 & 1 & 3 & 5 & 2 \\
 3 & 2 & 1 & 4 & 5 \\
 3 & 5 & 4 & 1 & 2 \\
 4 & 2 & 5 & 3 & 1
 \end{pmatrix}
 \]
By numerical calculation, for $\pi_{1,2}$ on this space there is an invariant distribution 
\[
\theta \approx (0.149 \ 0.188\  0.203\  0.298 \ 0.162) 
\]
for which the transition matrix is
  \[ K \approx 
 \begin{pmatrix}
 0.276 & 0.097 & 0.304 & 0.297 & 0.026\\
 0.111 & 0.341 & 0.222 & 0.089 & 0.237 \\
 0.159 & 0.265 & 0.365 & 0.185 & 0.026 \\
 0.139 & 0.036 & 0.118 & 0.507 & 0.201 \\
 0.083 & 0.28  & 0.041 & 0.298  & 0.298
 \end{pmatrix}
 \]
This example was found by simulating random distance matrices $D$, obtaining the rank matrix $R$, and then numerically solving for invariant distributions $\theta$ until finding a solution with full support.
Note this involved non-linear equations: we need to solve $\theta K = \theta$ but here $K$ depends on $\theta$, for instance for $\pi_{1,2}$ 
\[ k(i,i) = 1 - (1- \theta(i))^2 \]
\[ \mbox{ if } r(i,j) = 5 \mbox{ then } k(i,j) = \theta^2(j) . \]
Note also that for $|S| = 5$ there are only a finite number of possible rank matrices $R$, so this counter-example is not like a  counter-example depending on a real parameter taking a specific value.

To be precise, let us define a {\em rank matrix} $R$ to be a matrix where each row consists of $\{1, 2, \ldots,|S|\}$ in some order and each diagonal element $r_{ii} = 1$.
We are concerned with rank matrices which are {\em feasible} in that they arise from some distance matrix $D$ with distinct entries.
We do not know precise conditions to determine whether a given rank matrix is feasible.
Also, we do not have any general conjecture about conditions under which  non-uniform invariant distributions exist.

 We observed that the invariant distribution in Figure \ref{Fig:2-point,k=5,j=4} was unstable. 
 Here is another example. 
 Consider again $\pi_{1,2}$ and the rank matrix (easily checked to be feasible) 
  \begin{equation}
    R = 
 \begin{pmatrix}
 1 & 2 & 4 & 3  \\
 2 & 1 & 3 & 4  \\
 4 & 2 & 1 & 3 \\
 2 & 4 & 3 & 1 
 \end{pmatrix}
 \label{R2}
 \end{equation}
 for which the invariant distribution is
 \[
 \theta = (\sfrac{1}{6} \ \sfrac{1}{6}  \ \sfrac{2}{6}  \ \sfrac{2}{6} )
 \]
and the transition matrix is
  \[ K =
 \begin{pmatrix}
 11/36 & 1/4 & 1/9 & 1/3  \\
 1/4 & 11/36 & 1/3 & 1/9 \\
 1/36 & 7/36 & 5/9 & 2/9 \\
 7/36 & 1/36 & 2/9 & 5/9
 \end{pmatrix}
 \] 
 This is unstable under a generic small perturbation of the invariant distribution, as illustrated in Figure  \ref{Fig:unstable_5}.


 \begin{figure}[ht]
 \includegraphics[width=4.7in]{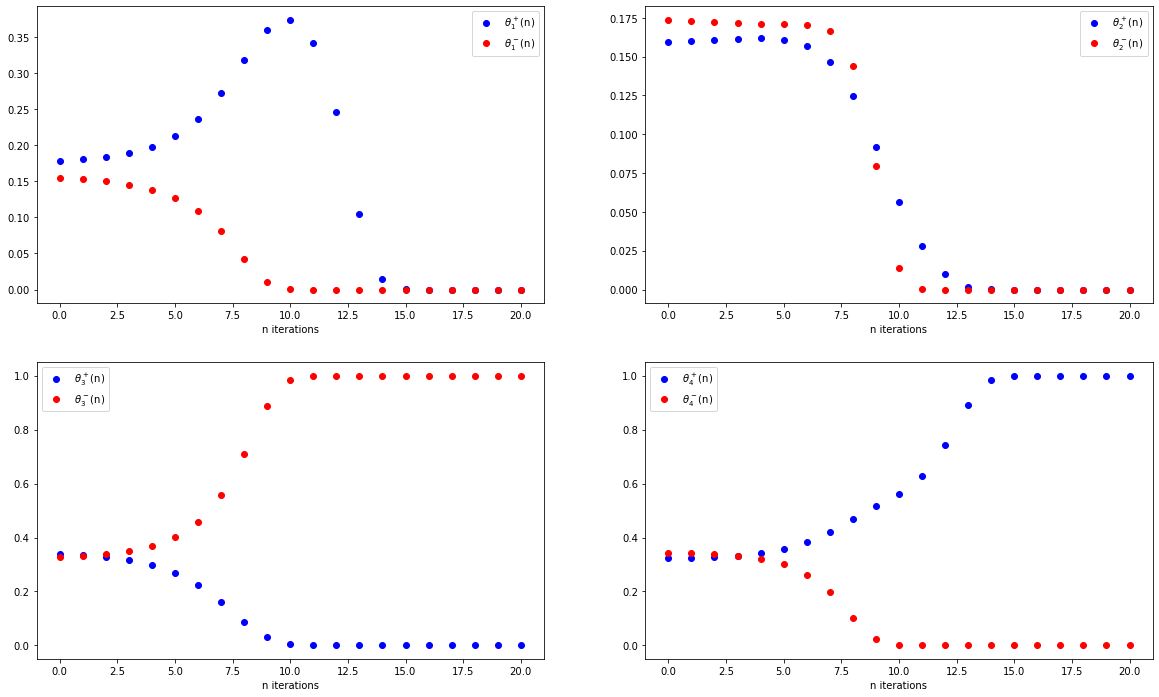}
\caption{$|S| = 4$, rank matrix $R$ at (\ref{R2}), $\pi_{1,2}$.      Unstable behavior of iterates $\theta_i(n), n = 0,1,2,\ldots, 20$ in panels $i = 1,2,3,4$, starting 
from two different initial distributions  $\theta^+, \theta^-$ near the fixed point.}
\label{Fig:unstable_5}
\end{figure}

However if instead of $\pi_{1,2}$ we consider $\pi_{2,2}$ for the same rank matrix (\ref{R2}), then
by Theoremj \ref{P:k=2} we have same invariant
distribution $\theta =  (\sfrac{1}{6} \ \sfrac{1}{6} \ \sfrac{2}{6} \ \sfrac{2}{6})$,  but 
if we take an initial distribution of the form 
$(\sfrac{1}{6} + \eps \ \ \sfrac{1}{6} + \eps \ \ \sfrac{2}{6} - \eps \ \ \sfrac{2}{6} - \eps)$ 
then iterates do converge to the fixed point.
This example seems to depend on a certain ``partial symmetry" property of the rank matrix which is copied to the perturbed initial distribution.
We do not know a precise formulation of the ``partial symmetry" idea here.
We avoid that issue as follows,  formulating  the strongest conjecture to which we do not have a counter-example.
 \begin{Conjecture}
 \label{Con:12}
 On any finite space $S$ with distinct distances, for {\em almost all}  initial distributions $\theta$ 
 the iterative process 
 $(\pi^n_{j,k}(\theta), n \ge 1)$ 
 converges to a limit of the form  $\delta_s$ or $\delta_{s_1,s_2}$.  
 \end{Conjecture}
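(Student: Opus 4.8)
Fix the finite space $S$ with $|S| = n$ and distinct distances, so that the rank matrix $R$ is well defined and, crucially, \emph{independent of $\theta$}. For a fixed pair $(j,k)$ the kernel entries $k(i,\ell)$ are then fixed polynomials in the coordinates of $\theta$ (the formulas displayed in Section~\ref{sec:non-uniform} are typical), and by Theorem~\ref{T:1} the stationary equation $\theta K = \theta$ has a unique solution for every full-support $\theta$. Solving it exhibits $\pi_{j,k}$ as a \emph{rational map} of the open simplex $\PP^\circ(S) = \{\theta : \theta_i > 0,\ \sum_i \theta_i = 1\}$ into itself, smooth wherever the relevant determinant is nonzero (uniqueness in Theorem~\ref{T:1} guarantees nonvanishing throughout $\PP^\circ(S)$). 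Moreover (\ref{tkA}) shows $\pi_{j,k}(\theta)$ and $\theta$ have the same support, so every face of the simplex is invariant. Thus the conjecture is a statement about basins of attraction of a smooth self-map of a manifold-with-corners, and I would attack it with tools from smooth ergodic theory.

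The logical skeleton I would use is: it suffices to prove (A) for almost every $\theta$ the orbit $(\pi^n_{j,k}(\theta))$ converges to \emph{some} fixed point, and (B) every fixed point other than the omnipresent ones has a Lebesgue-null basin; together these force the a.e.\ limit to be omnipresent, without any need to analyze \emph{which} $\delta_s$ or $\delta_{s_1,s_2}$ attracts for a given $(j,k)$. For the bookkeeping I would enumerate the fixed-point set on all faces at once: the omnipresent points (vertices $\delta_s$ and edge midpoints $\delta_{s_1,s_2}$), together with the ``sporadic'' fixed points supported on three or more points (interior or on higher faces). These sporadic points solve a fixed real-algebraic system; generically they are finite in number, but the argument below tolerates positive-dimensional components via normal hyperbolicity.

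For step (B) the plan is a linearization argument. At each sporadic fixed point $\phi$ I would compute $D\pi_{j,k}(\phi)$ and show it has at least one eigenvalue of modulus $>1$: the instability observed numerically in Figures~\ref{Fig:2-point,k=5,j=4} and \ref{Fig:unstable_5} is precisely the assertion that these points are saddles or repellers. Granting hyperbolicity with a nontrivial unstable direction, the stable-manifold theorem gives a local stable manifold of dimension strictly below $n-1$, hence of measure zero; the full stable set is $\bigcup_{m \ge 0} \pi_{j,k}^{-m}(\text{local stable manifold})$, a countable union of smooth preimages under a map whose critical locus is a proper algebraic variety, so it stays Lebesgue-null. (For a positive-dimensional component of sporadic fixed points one replaces the stable-manifold theorem by normally hyperbolic invariant manifold theory, with the same null conclusion.) The symmetric orbits that \emph{do} converge to sporadic points --- the ``partial symmetry'' orbits for $\pi_{2,2}$ noted after (\ref{R2}), reconciled with the $\pi_{2,2}\leftrightarrow\pi_{1,2}$ correspondence of Theorem~\ref{P:k=2} --- then live exactly on these measure-zero stable manifolds, consistent with the whole picture.

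The hard part --- and the reason this remains a conjecture --- is step (A): proving that for almost every $\theta$ the orbit converges to a fixed point \emph{at all}, i.e.\ ruling out attracting periodic orbits, invariant circles, or more complicated attractors of positive measure. Unlike the linear theory of a single Markov chain, the iterated map $\pi_{j,k}$ carries no evident contraction or Lyapunov structure, and I know of no function monotone along orbits: the correct such function would have to drive mass onto one point when $j/k$ is small yet onto two points when $j/k$ is near $1$ (item (c) of the big picture), so it cannot be a naive concentration functional like $\sum_i \theta_i^2$. The most promising route seems to be a strict Lyapunov function off the fixed-point set built from the spectral gap of $K^{\theta,j,k}$ or from a suitably weighted variance, exploiting the reinforcement flavour of the dynamics (more probable points tend to be reinforced). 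Absent such global structure, even the one-dimensional binomial case of Section~\ref{sec:binomial} is only verified numerically, which strongly suggests that a rigorous proof of Conjecture~\ref{Con:12} will require genuinely new input beyond the local linear analysis sketched here.
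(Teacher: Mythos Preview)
The statement you are attempting is labeled \textbf{Conjecture}~\ref{Con:12} in the paper, and the paper offers no proof of it whatsoever; it is presented precisely as an open problem supported only by the numerical evidence of Table~\ref{Table:1}, Figure~\ref{Fig:unstable_5}, and the $9$-point example of Section~\ref{sec:2-dim}. So there is no ``paper's own proof'' against which to compare your proposal.

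That said, your write-up is a coherent and well-informed \emph{strategy outline} rather than a proof, and you yourself identify this accurately. Your decomposition into (A) almost-sure convergence to \emph{some} fixed point and (B) nullity of the basins of the sporadic fixed points is the natural one. Step (B) via linearization and the stable-manifold theorem is plausible in principle, though note that you have not actually \emph{proved} the key spectral claim --- that $D\pi_{j,k}(\phi)$ has an eigenvalue of modulus $>1$ at every sporadic $\phi$ --- you have only asserted that the numerics suggest it. For a genuine proof you would need either a general argument covering all rank matrices and all $(j,k)$, or a case analysis, and neither is supplied. Step (A), as you rightly say, is the real obstruction: ruling out non-fixed-point attractors for a rational self-map of the simplex with no evident Lyapunov function is exactly the kind of global dynamical statement that the paper's authors also could not establish, which is why the statement remains a conjecture. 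Your closing paragraph is an honest assessment of where the difficulty lies, and matches the paper's own stance.
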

 Here {\em almost all} is with respect to the natural uniform measure on the simplex of probability distributions on $S$.
 This would imply that our original plan for finding invariant distributions, by iterating from some haphazardly-chosen initial distribution, would not be effective in finding {\bf all} invariant distributions.

 \subsection{Uniform invariant distributions}
 We noted earlier that  strong symmetry conditions on a compact metric space $S$ would be sufficient to imply that the uniform distribution was invariant.
 For finite $S$, it is clearly sufficient that the rank matrix $R$ is a Latin square.
 To fit our context we need the rank matrix to be feasible, so we mention the following result:
 we do not know if it is new.
 \begin{Lemma}
 \label{L:rank}
 If a rank matrix $R$ is a Latin square, then $R$ is feasible if and only if $R$ is a symmetric matrix.
 \end{Lemma}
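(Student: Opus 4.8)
The plan is to strip away the metric content and recast feasibility as a purely combinatorial statement about edge-orderings of the complete graph $K_n$, where $n=|S|$. The key initial observation is that \emph{any} distinct distances lying in the interval $(1,2)$ automatically satisfy the triangle inequality, so a rank matrix $R$ is feasible if and only if there is \emph{some} linear ordering of the $\binom n2$ edges of $K_n$ that induces it; the actual numerical values are irrelevant beyond fixing this order. So I would fix a realizing order, list the edges in increasing order of length, and let $G_t$ be the graph of the first $t$ edges. The identity that drives everything is that if $e=\{i,j\}$ is the $t$-th edge then
\[
r(i,j) = 2 + \deg_{G_{t-1}}(i),
\]
because $r(i,j)$ counts $i$ itself together with all neighbours of $i$ already joined by a shorter edge. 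Two consequences result: $R$ is \emph{symmetric} iff every edge joins two vertices of \emph{equal} degree in the graph present just before it is added (call such an edge \emph{balanced}); and $R$ is a \emph{Latin square} iff, for every vertex $j$, the prior degrees of the far endpoints over the $n-1$ edges at $j$ are all distinct (equal to $0,1,\dots,n-2$). Note the analogous row statement is automatic, since a vertex's own degree runs through $0,\dots,n-2$ as its edges appear; only the column condition is a genuine hypothesis.

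For the direction symmetric $\Rightarrow$ feasible, I would show more: a symmetric rank matrix is feasible whether or not it is Latin. When $R$ is symmetric the common value $w(\{i,j\}):=r(i,j)=r(j,i)$ is a well-defined function on edges, and every ordering constraint forced by a row, namely $d(i,j)<d(i,j')$ whenever $r(i,j)<r(i,j')$, respects $w$ in the sense that $w(\{i,j\})<w(\{i,j'\})$. Hence $w$ is a strictly increasing potential on the directed constraint graph, which therefore has no directed cycle; any linear extension, realized by distinct values in $(1,2)$, gives a distance matrix inducing $R$. In the Latin case this is simply the statement that a symmetric Latin-square rank matrix is a proper edge-colouring of $K_n$, i.e. a $1$-factorization; ordering the colour classes realizes it, and the existence of a $1$-factorization forces $n$ even.

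The substantive direction is feasible Latin square $\Rightarrow$ symmetric, and this is where I expect the real work. Assume $R$ is Latin and realized by some order, and suppose for contradiction that some edge is unbalanced. Let $t$ be the \emph{first} time an unbalanced edge $e_t=\{i,j\}$ is added, say with $\deg_{G_{t-1}}(i)=a>b=\deg_{G_{t-1}}(j)$. Since $i$ has exactly $a$ edges before time $t$ and all of them are balanced (by minimality of $t$), the far endpoint of $i$'s $(d{+}1)$-th edge has prior degree exactly $d$ for each $d=0,1,\dots,a-1$; in particular the far-degree value $b$, which is $\le a-1$, already occurs among $i$'s earlier edges. But $e_t$ contributes the far-degree value $b$ again, now at $i$'s own degree $a\neq b$. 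Thus two distinct edges incident to $i$ have far endpoints of equal prior degree $b$, so column $i$ of $R$ repeats the value $2+b$, contradicting the Latin condition. Hence no unbalanced edge exists and $R$ is symmetric.

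The main obstacle, and the point I would check most carefully, is the bookkeeping in this last step: verifying that the $a$ edges at $i$ preceding time $t$ are precisely those at ``far-degree levels'' $0,\dots,a-1$, and that minimality of $t$ forces each of them to be balanced, so that the recurrence of the value $b$ genuinely violates injectivity of column $i$. As a consistency check I would also record the parity remark: a symmetric Latin-square rank matrix requires each of the values $2,\dots,n$ to appear off-diagonally an even number of times (they pair up under transposition) yet each appears $n$ times, so such matrices exist only for even $n$; combined with the only-if direction, this shows that for odd $n$ there are no feasible Latin-square rank matrices at all, making the equivalence hold vacuously on both sides there.
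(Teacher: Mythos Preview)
The paper states Lemma~\ref{L:rank} without proof (it only remarks ``we do not know if it is new'' and then moves to the next subsection), so there is no argument in the paper to compare yours against.

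Your proof is correct. The recasting of feasibility as an edge-ordering of $K_n$ via distances in $(1,2)$ is exactly right, and the identity $r(i,j)=2+\deg_{G_{t-1}}(i)$ is the clean combinatorial handle. For the direction \emph{feasible Latin $\Rightarrow$ symmetric}, your minimal-counterexample argument is tight: if $e_t=\{i,j\}$ is the first unbalanced edge with $a=\deg_{G_{t-1}}(i)>\deg_{G_{t-1}}(j)=b$, then minimality forces each of $i$'s previous $a$ edges to be balanced, so their far-endpoint prior degrees are exactly $0,1,\dots,a-1$; the new edge contributes far-degree $b\le a-1$, giving two entries equal to $2+b$ in column~$i$ and contradicting the Latin condition. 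For \emph{symmetric $\Rightarrow$ feasible}, your potential-function argument (take any linear extension of the partial order, which is acyclic because $w(\{i,j\})=r(i,j)$ is strictly increasing along every constraint) works and indeed proves the stronger fact that \emph{any} symmetric rank matrix is feasible, Latin or not. The parity observation that symmetric Latin rank matrices force $n$ even is a correct and useful consistency check.
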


 \subsection{Finite sets in Euclidean space}
 \label{sec:2-dim}
 As another exploratory example of finite $S$, Figure \ref{Fig:9points} shows results of simulations 
 for a 9-point set. The points are located as a perturbed $3 \times 3$ pattern, marked as $\bullet$ in the Figure, with Euclidean distance in the plane.
  We fix $k = 10$ and consider each $j = 1,2,\ldots,10$.
 Results for three different initial distributions are shown, indicated by the three different colors.
 The colored vertical lines in each square indicate the initial probabilities at the associated point $\bullet$.
 The results show behavior analogous to the previous settings, in that the ``fixed point" limit of the iterative procedure 
 is either a unit mass $\delta_s$ or $\delta_{s_1,s_2} = \frac{1}{2}(\delta_{s_1} + \delta_{s_2})$.
 In Figure \ref{Fig:9points}, a $\delta_s$ limit for the $j$'th nearest process is represented by a colored $j$ in square $s$, or a limit $\delta_{s_1,s_2}$ is
 represented by a colored $(j)$ in both square $s_1$ and square $s_2$.  
 For instance, for $j = 8$ the limit is (for each initial distribution color)  uniform on some two points: top left and top right for blue,   bottom left and middle right for red, and top center and bottom left for brown.
 
\bigskip

  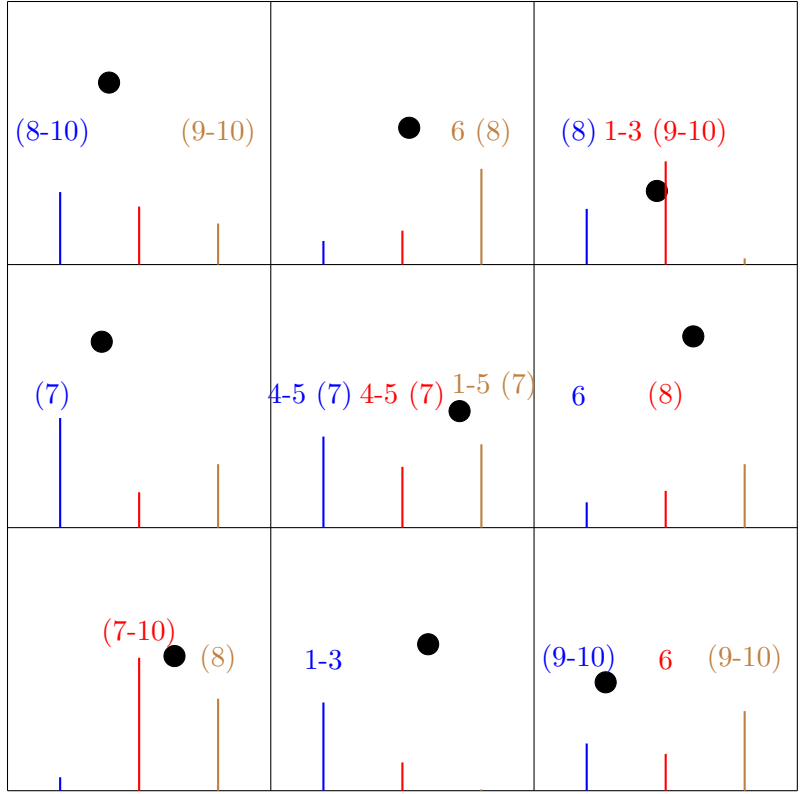
\begin{figure}[ht]
 \begin{tikzpicture}[x=3.5cm,y=3.5cm]
 \draw[thin,black] (1.75,0.75) -- (1.75,3.75) ;
 \draw[thin,black] (2.75,0.75) -- (2.75,3.75) ;
  \draw[thin,black] (0.75,0.75) -- (0.75,3.75) ;
 \draw[thin,black] (3.75,0.75) -- (3.75,3.75) ;
 \draw[thin,black] (0.75,1.75) -- (3.75,1.75) ; 
 \draw[thin,black] (0.75,2.75) -- (3.75,2.75) ; 
  \draw[thin,black] (0.75,0.75) -- (3.75,0.75) ; 
 \draw[thin,black] (0.75,3.75) -- (3.75,3.75) ; 
 \filldraw (1.3843, 1.2619) circle [radius=0.04];
 \filldraw (1.108, 2.4567) circle [radius=0.04];
  \filldraw (1.1358, 3.4418) circle [radius=0.04];
  \filldraw (2.2758, 3.27) circle [radius=0.04];
  \filldraw (3.022, 1.162) circle [radius=0.04];
 \filldraw (3.3549, 2.477) circle [radius=0.04];
  \filldraw (2.4671, 2.1929) circle [radius=0.04];
  \filldraw (2.3478, 1.3067) circle [radius=0.04];
 \filldraw (3.2165, 3.0299) circle [radius=0.04];
 
 \draw[thick,blue] (0.95,0.75) -- (0.95,0.75 + 2*0.0256);
  \draw[thick,blue] (0.95,1.75) -- (0.95,1.75 + 2*0.2083);
 \draw[thick,blue] (0.95,2.75) -- (0.95,2.75 + 2*0.1378);
  \draw[thick,blue] (1.95,0.75) -- (1.95,0.75 + 2*0.1677);
  \draw[thick,blue] (1.95,1.75) -- (1.95,1.75 + 2*0.1731);
 \draw[thick,blue] (1.95,2.75) -- (1.95,2.75 + 2*0.0449);
  \draw[thick,blue] (2.95,0.75) -- (2.95,0.75 + 2*0.0897);
  \draw[thick,blue] (2.95,1.75) -- (2.95,1.75 + 2*0.0481);
 \draw[thick,blue] (2.95,2.75) -- (2.95,2.75 + 2*0.1058);
 
  \draw[thick,red] (1.25,0.75) -- (1.25,0.75 + 2*0.2527);
  \draw[thick,red] (1.25,1.75) -- (1.25,1.75 + 2*0.0672);
 \draw[thick,red] (1.25,2.75) -- (1.25,2.75 + 2*0.1102);
  \draw[thick,red] (2.25,0.75) -- (2.25,0.75 + 2*0.0538);
  \draw[thick,red] (2.25,1.75) -- (2.25,1.75 + 2*0.1156);
 \draw[thick,red] (2.25,2.75) -- (2.25,2.75 + 2*0.0645);
  \draw[thick,red] (3.25,0.75) -- (3.25,0.75 + 2*0.0699);
  \draw[thick,red] (3.25,1.75) -- (3.25,1.75 + 2*0.0699);
 \draw[thick,red] (3.25,2.75) -- (3.25,2.75 + 2*0.1962);
 
   \draw[thick,brown] (1.55,0.75) -- (1.55,0.75 + 2*0.1749);
  \draw[thick,brown] (1.55,1.75) -- (1.55,1.75 + 2*0.1206);
 \draw[thick,brown] (1.55,2.75) -- (1.55,2.75 + 2*0.0780);
  \draw[thick,brown] (2.55,0.75) -- (2.55,0.75 + 2*0.0024);
  \draw[thick,brown] (2.55,1.75) -- (2.55,1.75 + 2*0.1584);
 \draw[thick,brown] (2.55,2.75) -- (2.55,2.75 + 2*0.182);
  \draw[thick,brown] (3.55,0.75) -- (3.55,0.75 + 2*0.1513);
  \draw[thick,brown] (3.55,1.75) -- (3.55,1.75 + 2*0.1206);
 \draw[thick,brown] (3.55,2.75) -- (3.55,2.75 + 2*0.0118);

 \node[blue] at (0.92,2.25) {(7)};
 \node[blue] at (0.92,3.25) {(8-10)};
 \node[blue] at (1.9,2.25) {4-5 (7) };
 \node[blue] at (1.95,1.25) {1-3 }; 
 \node[blue] at (2.92,3.25) {(8)};
 \node[blue] at (2.92,2.25) {6};
  \node[blue] at (2.92,1.25) {(9-10)};
  
 \node[red] at (3.25,3.25) {1-3 (9-10)};
 \node[red] at (2.25,2.25) {4-5 (7) };
 \node[red] at (3.25,1.25) {6};
 \node[red] at (1.25,1.35) {(7-10)};
 \node[red] at (3.25,2.25) {(8)};
 
 \node[brown] at (2.6,2.29) {1-5 (7) };
 \node[brown] at (2.55,3.25) {6 (8)};
 \node[brown] at (1.55,1.25) {(8)};
 \node[brown] at (1.55,3.25) {(9-10)};
 \node[brown] at (3.55,1.25) {(9-10)};
 \end{tikzpicture}
 \caption{A 9-point set in the plane.  See text for explanation.}
\label{Fig:9points}
\end{figure}
 
 \bigskip
 
 What we observe in these results (in each of 11 trials from different initial distributions, extending the 3 trials shown in the figure) is that
 \begin{quote} 
 (*) for $j \le 6$ the limit is always some $\delta_s$ whereas for $j \ge 7$ the limit is always
 some $\delta_{s_1,s_2}$. But the precise limit -- which $s$ or $s_1, s_2$ -- depends both on $j$ and the initial distribution.
 \end{quote}
 
 Remarkably, simulations for an analogous perturbed $5 \times 5$ pattern $S$ show exactly the same behavior described in (*),
 and so do  simulations for an analogous perturbed $3 \times 3 \times 3$ pattern in three dimensions.

 Here is one conjecture.
 \begin{Conjecture}
 \label{C:Euclid}
 For finite $S$ embedded in the plane, with Euclidean distance, consider $\pi_{1,2}$.  
 Then for almost all initial distributions,  the limit of the iterative process is  some $\delta_s$.
 \end{Conjecture}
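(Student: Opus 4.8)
The plan is to treat $\pi_{1,2}$ as a \emph{fixed} polynomial self-map of the probability simplex $\Delta = \PP(S)$ and carry out a dynamical-systems analysis: identify the fixed points, determine their local stability, and combine a global Lyapunov argument with measure-zero stable-set estimates to conclude that the union of the basins of the point masses $\delta_s$ has full measure. The first point to record is that, because $S$ is finite with distinct distances, the rank data are independent of $\theta$, so the transition probabilities $k(i,j) = \theta_j\big(\theta_j + 2\,\theta\{l : d(i,l) > d(i,j)\}\big)$ are fixed quadratics in $\theta$, and $\theta \mapsto \theta' := \pi_{1,2}(\theta)$, with $\theta'_j = \sum_i \theta_i\,k(i,j)$, is a single cubic map $\Delta \to \Delta$, smooth up to the boundary. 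Its fixed points form a real algebraic set containing the $|S|$ masses $\delta_s$, the $\binom{|S|}{2}$ two-point masses $\delta_{s_1,s_2}$, and possibly sporadic interior points as in Section \ref{sec:non-uniform}.

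Second, I would compute the local behaviour at the omnipresent fixed points; both computations are elementary and, notably, use only the metric, not planarity. At $\delta_s$, writing $\theta = \delta_s + \sum_{l\ne s}\epsilon_l(\delta_l-\delta_s)$ with $\epsilon \to 0$, every set $\{l : d(s,l) > d(s,j)\}$ excludes $s$, so for $j \ne s$ the dominant term $i=s$ of $\theta'_j$ already carries a factor $\theta\{l : d(s,l)>d(s,j)\} = O(\epsilon)$ on top of the factor $\theta_j = \epsilon_j$, while every term $i \ne s$ carries weight $\theta_i = O(\epsilon)$; hence $\theta'_j = O(\|\epsilon\|^2)$ and the Jacobian of $\pi_{1,2}$ at $\delta_s$ vanishes. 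Thus each $\delta_s$ is \emph{super-attracting}, with an open (hence positive-measure) basin. At $\delta_{s_1,s_2}$ the edge of $\Delta$ supported on $\{s_1,s_2\}$ is forward invariant (Theorem \ref{T:1} preserves support), and on it $\pi_{1,2}$ is the scalar map $p \mapsto p^2/(p^2+(1-p)^2)$, whose fixed point $p = \tfrac12$ is repelling (derivative $2$); this supplies an explicit unstable direction, so the local stable set of $\delta_{s_1,s_2}$ lies in a submanifold of codimension $\ge 1$ and has measure zero.

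Third, to upgrade these local statements to a global conclusion I would seek a strict Lyapunov function. The natural candidate adapted to the Euclidean embedding is the variance $V(\theta) = \var_\theta = \sum_i \theta_i\|x_i\|^2 - \|\sum_i \theta_i x_i\|^2$, and the heuristic is exactly the $j=1$ mechanism: the nearer of two samples to an independent reference point rarely lands on an outlier, so selection contracts the distribution. If one can show $V(\pi_{1,2}(\theta)) \le V(\theta)$ with equality only at fixed points, then LaSalle's invariance principle forces every trajectory to converge to the fixed-point set, ruling out cycles and more exotic limit sets. Combined with the super-attracting nature of the $\delta_s$ and the measure-zero stable sets of the $\delta_{s_1,s_2}$, the remaining task is to show that the sporadic fixed points are likewise non-attracting, so that by a stable-manifold / transversality argument the union of their stable sets (together with those of the two-point masses) is null; almost every initial $\theta$ then lies in the basin of some $\delta_s$.

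I expect the main obstacles to be precisely the two global ingredients, and this is where the Euclidean/planar hypothesis must do its work, since the two local computations above are metric-general. The first obstacle is proving monotonicity of the Lyapunov function for \emph{every} planar configuration: this should reduce to an exchangeability computation for the three i.i.d.\ points $X,Y_1,Y_2$ in which convexity of $\|\cdot\|^2$ is the essential input, but controlling it uniformly over all configurations and all $\theta$ is delicate, and I am not certain the clean inequality holds without further hypotheses. The second, and I think deeper, obstacle is the sporadic fixed points: we do not even know they are finite in number for planar $S$, and showing each has an unstable direction seems to require a genuinely new second-variation argument exploiting the Euclidean geometry, rather than the soft edge-restriction trick that disposes of the two-point masses. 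Resolving these two points --- ideally via a single convexity-based estimate that simultaneously yields Lyapunov monotonicity and obstructs interior attractors --- is the crux.
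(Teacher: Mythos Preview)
The statement you are attempting to prove is labelled \emph{Conjecture} in the paper, and the paper offers no proof of it; indeed the surrounding text and the final remarks (``to give rigorous proofs of, for instance, convergence of iterates to some $\delta_s$, it seems natural to investigate some kind of monotonicity property \ldots\ but we have been unable to implement that idea'') make clear that the authors regard this as open. So there is no paper proof to compare against, and your proposal should be read as an attack on an open problem.

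On the substance of the attack, there is a basic mis-specification at the outset. You write $\theta' := \pi_{1,2}(\theta)$ with $\theta'_j = \sum_i \theta_i\, k(i,j)$ and call this a cubic self-map of the simplex. That formula is one step of the Markov chain $K^\theta$ applied to the initial law $\theta$; it is \emph{not} $\pi_{1,2}(\theta)$, which by definition is the \emph{stationary} distribution of $K^\theta$, obtained by solving the linear system $\pi K^\theta = \pi$ and hence a rational (not polynomial) function of $\theta$. The two maps share the same fixed points (since $\pi_{1,2}(\theta)=\theta$ iff $\theta$ is stationary for $K^\theta$ iff $\theta K^\theta=\theta$), but their local dynamics differ, so your Jacobian computations do not automatically carry over. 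You in fact switch conventions midway: on the two-point edge you correctly write $p\mapsto p^2/(p^2+(1-p)^2)$, which is the stationary-distribution map, not $p\mapsto 3p^2-2p^3$, which is what your cubic formula gives. The super-attracting conclusion at $\delta_s$ and the repelling conclusion at $\delta_{s_1,s_2}$ happen to survive under the correct definition, but they need to be re-derived.

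Even after that repair, your proposal is explicitly a programme rather than a proof: you yourself flag that the two decisive steps --- a Lyapunov inequality $V(\pi_{1,2}(\theta))\le V(\theta)$ valid for every planar configuration, and non-attraction of all sporadic interior fixed points --- are unresolved, and you do not indicate how the planarity hypothesis would enter either argument. These are exactly the obstacles the authors allude to; absent a concrete mechanism there, what you have is a plausible strategy, not a proof.
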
 
 Indeed this might be true on every compact metric space $S$, for some appropriate notion of ``almost all".

 \section{A discrete tree space}
 \label{sec:tree}

 \setlength{\unitlength}{0.1in}
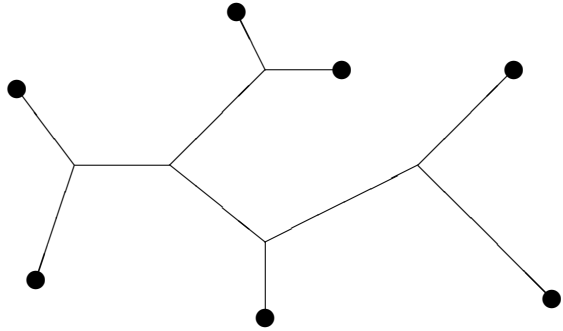
\begin{figure}
\begin{picture}(30,22)(-8,0)
\put(10,10){\line(-1,0){5}}
\put(10,10){\line(1,1){5}}
\put(10,10){\line(5,-4){5}}
\put(15,6){\line(2,1){8}}

\put(5,10){\line(-3,4){3}}
\put(2,14){\circle*{1}}
\put(5,10){\line(-1,-3){2}}
\put(3,4){\circle*{1}}

\put(15,15){\line(-1,2){1.5}}
\put(13.5,18){\circle*{1}}
\put(15,15){\line(1,0){4}}
\put(19,15){\circle*{1}}

\put(15,6){\line(0,-1){4}}
\put(15,2){\circle*{1}}

\put(23,10){\line(1,1){5}}
\put(28,15){\circle*{1}}
\put(23,10){\line(1,-1){7}}
\put(30,3){\circle*{1}}
\end{picture}
\caption{A BTL space $S$ with $|S| = 7$.}
\label{Fig:BTL}
\end{figure}

In \cite{mappings_short}  we study  binary tree leaves (BTL), illustrated in Figure \ref{Fig:BTL}, as a class of finite spaces.
Here $S$ is the finite set of leaves; 
the edges have lengths which serve  to determine the distance between two leaves  as the length of the unique path 
between them; the edges also define $|S| - 2$ branchpoints.
To ``break symmetry" we assume
\begin{equation}
\mbox{ all distances $(d(s_i,s_j), j \ne i)$ are distinct.}
\label{distinct}
\end{equation}
We claim that, 
as suggested by the general picture from numerics, 
 for $k = 2$ there are no invariant measures other
than the omnipresent ones.  
An invariant measure supported on a subset of leaves is an invariant measure on the induced spanning tree of that subset,
so to prove that claim it suffices to prove
\begin{Theorem}[\cite{mappings_short} Theorem 7]:
\label{Conj}
On a BTL space $S$ with $|S| \ge 3$ and satisfying \eqref{distinct}, and for $k=2$,
there are no invariant measures with full support.
\end{Theorem}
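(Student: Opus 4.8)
The plan is to work throughout with $\pi_{1,2}$, which is legitimate because Theorem \ref{P:k=2} identifies the invariant distributions of $\pi_{1,2}$ with those of $\pi_{2,2}$. Since all pairwise distances are distinct by \eqref{distinct} there are no ties, so the transition kernel of $\bX^{\theta,1,2}$ takes the explicit form
\[ K(s,t) = \theta(t)^2 + 2\,\theta(t)\,\theta\{u : d(s,u) > d(s,t)\}. \]
A full-support invariant $\theta$ must satisfy $\sum_s \theta(s) K(s,t) = \theta(t)$; dividing by $\theta(t) > 0$ this becomes the system
\[ \theta(t) + 2\sum_{s} \theta(s)\,\theta\{u : d(s,u) > d(s,t)\} = 1, \qquad t \in S, \]
equivalently $\Pr(d(S,U) \le d(S,t)) = \tfrac12(1+\theta(t))$ for every leaf $t$, where $S,U$ are independent $\theta$-samples. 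The goal is to show this system has no solution with all coordinates strictly positive.

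The geometric input is the cherry structure of trees. Let $\{a,b\}$ realize $d(a,b) = \min_{i\ne j} d(s_i,s_j)$; such a closest pair is necessarily a cherry, sharing a branchpoint $v$, and minimality forces every other leaf $u$ to satisfy $d(a,u) > d(a,b)$ and $d(b,u) > d(a,b)$. Writing $\alpha = d(a,v) < \beta = d(b,v)$ (distinct by \eqref{distinct}), every leaf outside $\{a,b\}$ is closer to $a$ than to $b$, and within the closed ball of radius $d(a,b)$ about $a$ (resp.\ $b$) the only leaves are $a$ and $b$. I would first dispose of the base case $|S| = 3$ by direct computation: there the single outer leaf $c$ is farthest from both $a$ and $b$, so its inflow equation decouples and forces $\theta(c) = \tfrac12$, after which the $a$- and $b$-equations force $\theta(a) = 0$, contradicting full support.

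For $|S| \ge 4$ I would argue by induction on the number of leaves, pruning the farther cherry leaf $b$: suppressing the resulting degree-two vertex $v$ yields a BTL space $S^- = S\setminus\{b\}$ in which $a$ is joined to the rest by an edge of length $\alpha+\gamma$, where $\gamma$ is the length of the third edge at $v$; crucially, all distances among the surviving leaves are unchanged. The natural candidate on $S^-$ is the lumped measure $\theta^-$ with $\theta^-(a) = \theta(a)+\theta(b)$ and $\theta^-(u) = \theta(u)$ otherwise. Comparing the invariance system for $(S,\theta)$ with that for $(S^-,\theta^-)$, the two agree termwise except for contributions of leaves that are split across a threshold, i.e.\ configurations with $d(s,a) < d(s,t) < d(s,b)$, for which $b$ lies outside a ball while the lumped mass at $a$ lies inside. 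If these correction terms vanished, $\theta^-$ would be a full-support invariant measure on $S^-$, contradicting the inductive hypothesis.

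The main obstacle is precisely the control of these threshold-crossing corrections. Each is supported on a thin distance-shell of width $\beta-\alpha$ about each source, and I expect the heart of the proof to be a bookkeeping argument showing, via the additive (four-point) structure of a tree metric together with \eqref{distinct}, that these corrections cannot be arranged so as to sustain a genuine full-support fixed point: either they vanish and induction applies to $\theta^-$ on $S^-$, or they themselves already force some coordinate to $0$ or to the boundary value $\tfrac12$, as in the base case. I anticipate that organizing this shell accounting cleanly---perhaps by summing the invariance equations over $t$ in an order chosen along the tree so that the shell contributions telescope---will be the technically demanding step, whereas the reduction to $\pi_{1,2}$ and the cherry geometry are routine.
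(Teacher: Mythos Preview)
The paper you are working from does \emph{not} contain a proof of this theorem: it explicitly defers the argument to the companion article \cite{mappings_short} and offers only the hint that ``the proof is a `proof by contradiction' depending on the specific space $S$, which does not suggest possible general proofs.'' So there is no in-paper proof to compare against, and the only structural information available is that the authors' argument is apparently a direct contradiction tailored to the tree geometry rather than an inductive reduction.

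On the substance of your proposal: the reduction to $\pi_{1,2}$, the explicit kernel, the invariance system, and the cherry geometry are all correct, and your base case $|S|=3$ is easily verified (with $\theta(c)=\tfrac12$ forced and then $\theta(a)\in\{0,\tfrac12\}$, contradicting full support). The genuine gap is the inductive step. There is no mechanism that forces the lumped measure $\theta^-$ to be invariant on $S^-$: for a source $s\notin\{a,b\}$ one has $d(s,b)=d(s,a)+(\beta-\alpha)$, so other leaves can and generically will fall strictly between $a$ and $b$ in the ranking from $s$, producing nonzero shell corrections. You acknowledge this, but your plan for handling it (``either the corrections vanish or they force a coordinate to $0$ or $\tfrac12$'') is a hope, not an argument; nothing in the invariance system suggests such a dichotomy, and a telescoping over $t$ does not naturally appear because the shell events $\{d(s,a)<d(s,t)<d(s,b)\}$ depend on $s$ and $t$ jointly in a way that the tree's four-point condition does not obviously organize. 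Absent a concrete identity or inequality that controls these corrections, the induction does not close, and the paper's hint that the actual proof is ad hoc rather than structural suggests your clean inductive route may not be the one that works.
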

 
 As with the analogous result below on the unit interval, the proof is a
 ``proof by contradiction" depending on the specific space $S$, 
 which does not suggest possible general proofs.

 \section{The unit interval}
 \label{sec:interval}
 Here we take $S$ to be the unit interval $[0,1]$.
 We will show simulations and discuss what they suggest regarding convergence to fixed points.
These are Monte Carlo simulations:  a distribution is represented as a population of (typically 500,000) sample points.
 In the Figures, densities are drawn via Gaussian KDE, which becomes inaccurate for sharply peaked distributions,
 and also tends to be inaccurate at endpoints.

 \subsection{Initial distribution $U[0,1]$.}
 \label{sec:U01}
 We first study, by simulation, iterates starting from the uniform distribution $U[0,1]$. 
 Because $U[0,1]$ is symmetric about $1/2$, all iterates must be symmetric about $1/2$; in section \ref{sec:non-u} we will study non-symmetric initial distributions,
to investigate whether symmetry might be forcing some non-generic behavior.
 
 \begin{figure}[ht]
\includegraphics[width=1.7in]{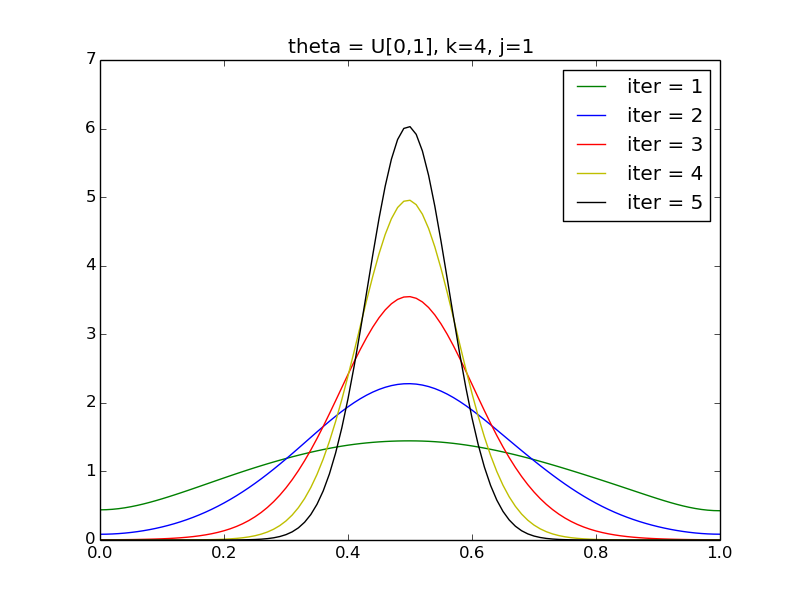}  \hspace*{-0.2in}
\includegraphics[width=1.7in]{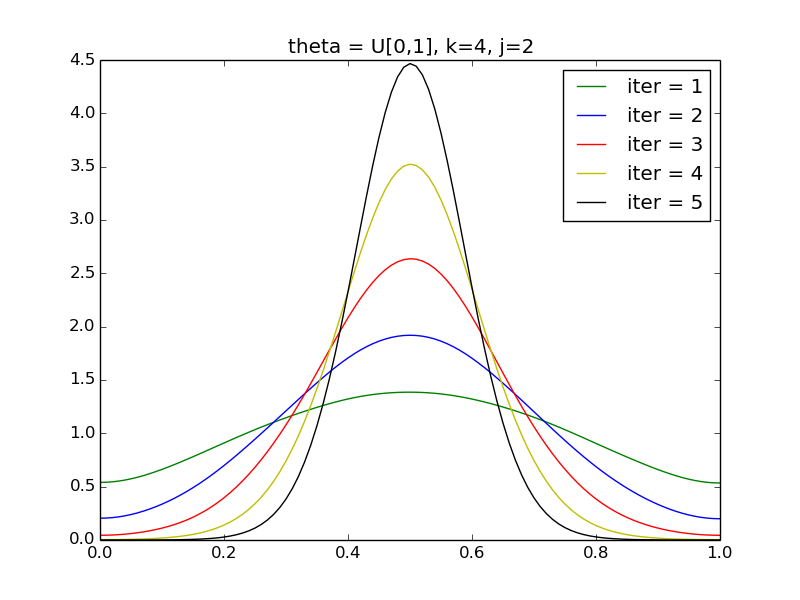}   \hspace*{-0.2in}
\includegraphics[width=1.7in]{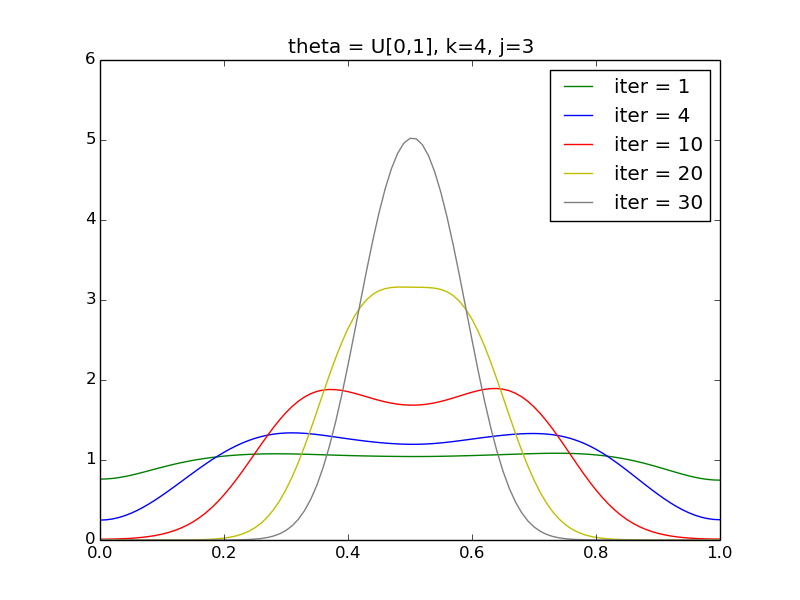} 
\caption{Iterates  from $U[0,1]$; \   $k = 4, j = 1,2,3.$}
\label{Fig:U1}
\end{figure}

 \begin{figure}[ht]
 \begin{center}
\includegraphics[width=1.7in]{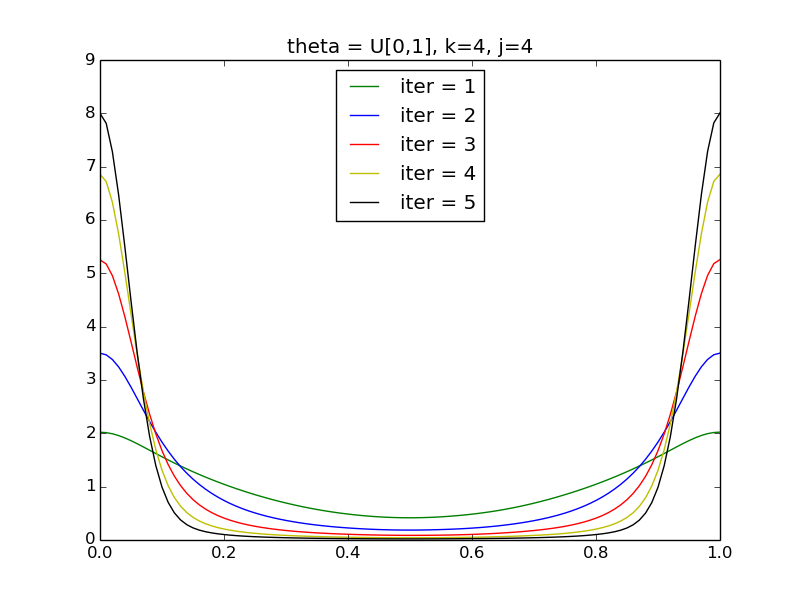}
\end{center}
\caption{Iterates from $U[0,1]$; \  $k = 4, j = 4$}
\label{Fig:U2}
\end{figure}

\noindent
First let us show the case $k = 4$. 
Figure \ref{Fig:U1} shows the first few iterates of $U[0,1]$ for $j = 1, 2, 3$.
Note that, here and throughout, the vertical scale and the numbers of iterations shown may not be the same from one panel to the next.
What we see strongly suggests that the iterates are converging, quickly for $j = 1$ but rather slowly for $j = 3$, 
toward the degenerate distribution $\delta_{1/2}$.  
This is strongly supported by examining the standard deviations of the iterates, shown on log scale in Figure \ref{Fig:sd1}, which will be discussed below. 
In contrast, Figure \ref{Fig:U2} for $j = 4$ strongly suggests that the iterates are converging quickly 
toward the mixture  $\delta_{0,1}$.  
These two ``extreme" behaviors -- convergence to $\delta_{1/2}$ for smaller $j$ or to $\delta_{0,1}$ for larger $j$ -- appear to hold for all $k$. 
Figure \ref{Fig:U3} shows simulations for $k = 6$, and Table \ref{Table:2} shows which behavior appears to hold in simulations for each pair $(j,k)$ with $k \le 9$.

 \begin{figure}[ht]
\includegraphics[width=1.7in]{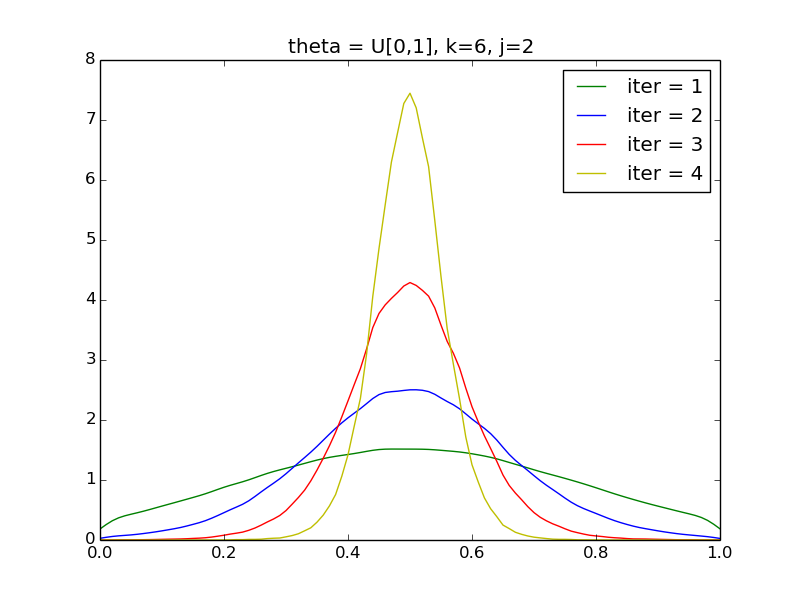}  \hspace*{-0.2in}
\includegraphics[width=1.7in]{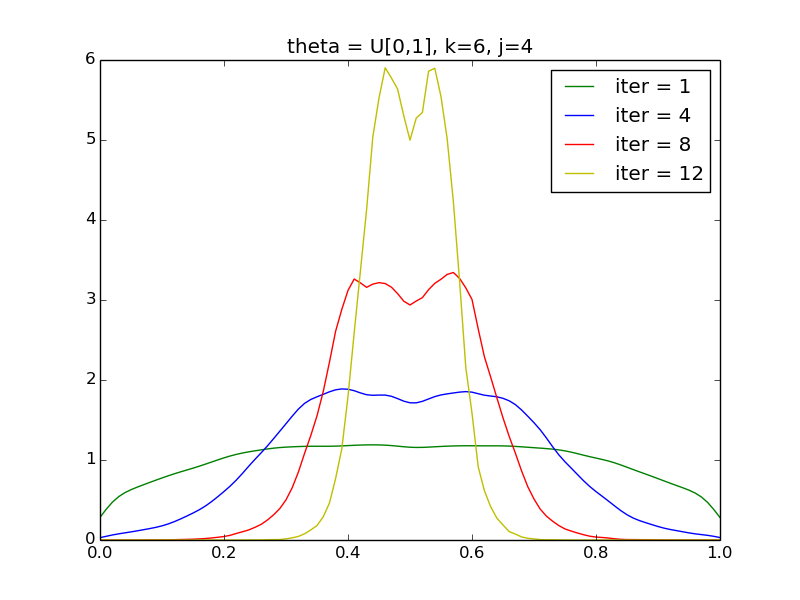}   \hspace*{-0.2in}
\includegraphics[width=1.7in]{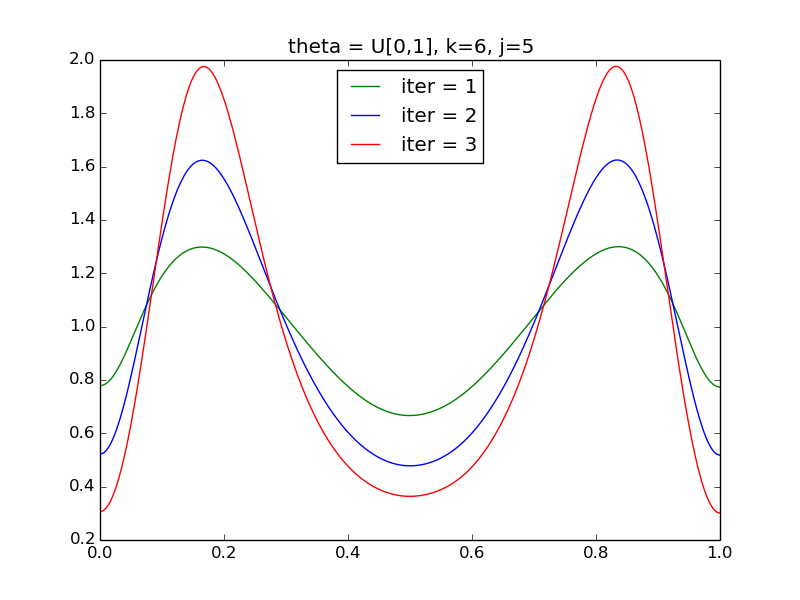} 
\caption{Iterates  from $U[0,1]$; \   $k = 6, j = 2,4,5.$}
\label{Fig:U3}
\end{figure}

\begin{table}[ht]
\caption{Conjectured limits of iterates from $U[0,1]$; \  the values of $j$ with each type of behavior.}
\begin{center}
$\begin{array}{ccc}
k & \to \delta_{1/2} & \to \delta_{0,1} \\
\hline 
2 & 1 & 2 \\
3 & [1,2] & 3\\
4 & [1 - 3] & 4 \\
5 & [1 - 4] &  5\\
6 & [1 - 4] &  [5, 6] \\
7 & [1 - 5] &   [6,7] \\
8 & [1 - 6] &   [7,8] \\
9 & [1 - 7] &    [8,9] 
\end{array}$
\end{center}
\label{Table:2}
\end{table}

The pattern in Table \ref{Table:2} is reminiscent of the pattern seen earlier for $S = \{a,b\}$, 
but one must be careful about the comparison. 
In Figure \ref{Fig:2-point,k=5,j=4} for $S = \{a,b\}$ each $\bullet$ represents a probability distribution, whereas in Figures \ref{Fig:U1} - \ref{Fig:U3}
for $S = [0,1]$ a probability distribution is represented by the estimated density curve.  
Figure \ref{Fig:2-point,k=5,j=4} shows iterates of the same $\pi_{j,k}$ from different initial distributions, all on the same graphic. 
 But for $S = [0,1]$ we will show different
graphics for the different initial distributions -- that is, we should compare Figures  \ref{Fig:U1} - \ref{Fig:U2} with Figures \ref{Fig:T1} -  \ref{Fig:T2} below.

Are there any cases on $[0,1]$ where the iterations of $\pi_{j,k}$ from the uniform initial distribution converge to a limit other than 
$\delta_{1/2}$ or $\delta_{0,1}$?
Consider the case $k=9, j=7$ in Figure \ref{Fig:U4} 
(the case $k = 6, j = 5$ in Figure \ref{Fig:U3} is similar).
Once two tight peaks are formed, one can see heuristically how the Markov chain behaves.
Suppose $s$ is near the left peak.
Amongst the $9$ samples, $4$ or $5$ will typically be near the right peak, in which case the 7'th closest to $s$ will be the 2nd of $4$ 
or the 3rd of $5$,
So on average the chain jumps to slightly to the left of center of the right peak.
This is consistent with what we observe in Figure \ref{Fig:U4}: the peaks grow in height and move slowly in the direction of $1/2$.
It may be possible, in a case like this, that the iterates converge to a limit of the form $\delta_{s,1-s}$ for $0 < s < 1/2$ rather than to $\delta_{1/2}$.
We cannot decide convincingly from our simulations.  
Thus heuristic may be more widely applicable -- see section \ref{sec:34}.

 \begin{figure}[ht]
 \begin{center}
\includegraphics[width=2.5in]{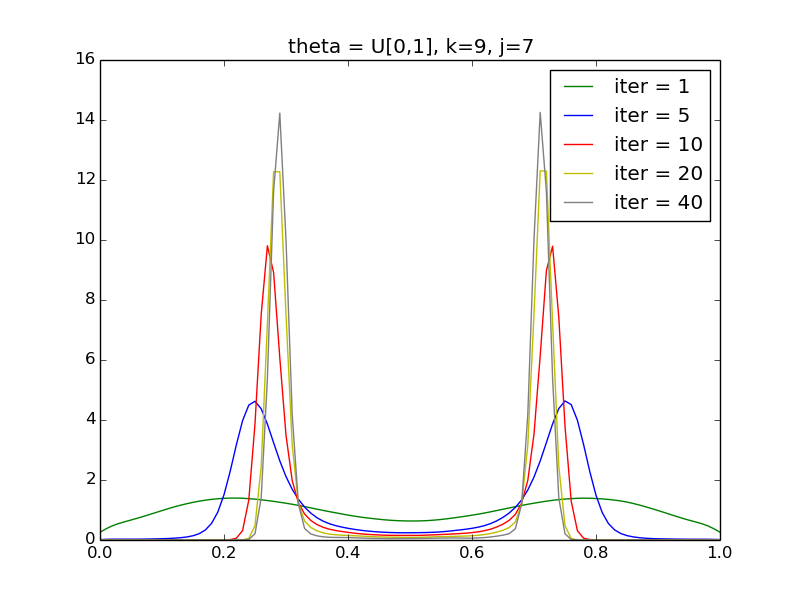}
\end{center}
\caption{Iterates from $U[0,1]$; \   $k = 9, j = 7$.}
\label{Fig:U4}
\end{figure}

Simulations also suggest a scaling limit.
Figure \ref{Fig:sd1} shows, for some $(j,k)$ for which the iterates converge to $\delta_{1/2}$  (others are similar), a remarkably precise 
geometric decrease in the s.d. as a function of the number of iterations, after the first few iterations.
This strongly suggests a certain form of asymptotic self-similarity under scaling: 
that there exists a mean-zero distribution, say $\dist(\xi_{j,k})$,  on $\Reals$ and a constant $0 < c_{j,k} < 1$ 
such that $\pi_{j,k}[\dist(\xi_{j,k})] = \dist (c_{j,k} \xi_{j,k})$. 
And that $\dist(\xi_{j,k})$ is the scaling limit of the iterates, and $c_{j,k}$ the geometric rate constant. 
Figure \ref{Fig:U5} shows renormalized (by mean and s.d.) iterates approximating $\dist(\xi_{2,4})$.

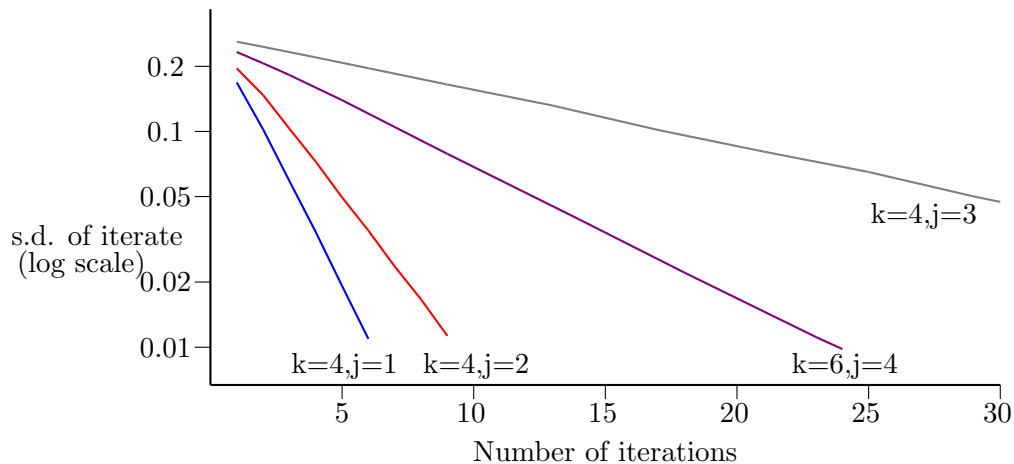
\begin{figure}[ht]
\begin{tikzpicture}[x=0.35cm,y=1.25cm]
\draw[thick,black] (0,0) -- (30,0 ) ;
\draw[thick,black] (0,0) -- (0,4) ;
\draw[thin,black] (0,{5 + ln(0.01)}) -- (-0.6,{5 + ln(0.01)}) ;
 \node at (-1.7,{5 + ln(0.01)}) {0.01} ; 
 \draw[thin,black] (0,{5 + ln(0.02)}) -- (-0.6,{5 + ln(0.02)}) ;
 \node at (-1.7,{5 + ln(0.02)}) {0.02} ; 
 \draw[thin,black] (0,{5 + ln(0.05)}) -- (-0.6,{5 + ln(0.05)}) ;
 \node at (-1.7,{5 + ln(0.05)}) {0.05} ;  
 \draw[thin,black] (0,{5 + ln(0.1)}) -- (-0.6,{5 + ln(0.1)}) ;
 \node at (-1.7,{5 + ln(0.1)}) {0.1} ; 
 \draw[thin,black] (0,{5 + ln(0.2)}) -- (-0.6,{5 + ln(0.2)}) ;
 \node at (-1.7,{5 + ln(0.2)}) {0.2} ; 
  \draw[thin,black] (5,0) -- (5,-0.15) ;
  \node at (5,-0.3) {5} ;
  \draw[thin,black] (10,0) -- (10,-0.15) ;
   \node at (9.9,-0.3) {10} ;
  \draw[thin,black] (15,0) -- (15,-0.15) ; 
 \node at (14.9,-0.3) {15} ;
 \draw[thin,black] (20,0) -- (20,-0.15) ;  
  \node at (19.9,-0.3) {20} ;
 \draw[thin,black] (25,0) -- (25,-0.15) ; 
 \node at (24.9,-0.3) {25} ;
  \draw[thin,black] (30,0) -- (30,-0.15) ;  
   \node at (29.9,-0.3) {30} ;

\draw[thick,blue] (1, {5  + ln(0.1677)}) -- (2,{5 + ln(0.1023)}) ;
\draw[thick,blue] (2, {5  + ln(0.1023)}) -- (3,{5 + ln(0.05906)}) ;
\draw[thick,blue] (3, {5  + ln(0.05906)}) -- (4,{5 + ln(0.03368)}) ;
\draw[thick,blue] (4, {5  + ln(0.03368)}) -- (5,{5 + ln(0.01915)}) ;
\draw[thick,blue] (5, {5  + ln(0.01915)}) -- (6,{5 + ln(0.01096)}) ;\
\node at (5.1,0.2) {k=4,j=1} ;

\draw[thick,red] (1, {5  + ln(0.1956)}) -- (2,{5 + ln(0.1456)}) ;
\draw[thick,red] (2, {5  + ln(0.1456)}) -- (3,{5 + ln(0.1030)}) ;
\draw[thick,red] (3, {5  + ln(0.1030)}) -- (4,{5 + ln(0.07157)}) ;
\draw[thick,red] (4, {5  + ln(0.07157)}) -- (5,{5 + ln(0.04956)}) ;
\draw[thick,red] (5, {5  + ln(0.04956)}) -- (6,{5 + ln(0.03424)}) ;
\draw[thick,red] (6, {5  + ln(0.03424)}) -- (7,{5 + ln(0.02374)}) ;
\draw[thick,red] (7, {5  + ln(0.02374)}) -- (8,{5 + ln(0.01640)}) ;
\draw[thick,red] (8, {5  + ln(0.01640)}) -- (9,{5 + ln(0.01133)}) ;
\node at (10.1,0.2) {k=4,j=2} ;

\draw[thick,gray] (1, {5  + ln(0.2602)}) -- (2,{5 + ln(0.2464)}) ;
\draw[thick,gray] (2, {5  + ln(0.2464)}) -- (4,{5 + ln(0.2202)}) ;
\draw[thick,gray] (4, {5  + ln(0.2202)}) -- (6,{5 + ln(0.1959)}) ;
\draw[thick,gray] (6, {5  + ln(0.1959)}) -- (9,{5 + ln(0.1639)}) ;
\draw[thick,gray] (9, {5  + ln(0.1639)}) -- (13,{5 + ln(0.1288)}) ;
\draw[thick,gray] (13, {5  + ln(0.1288)}) -- (17,{5 + ln(0.1016)}) ;
\draw[thick,gray] (17, {5  + ln(0.1016)}) -- (21,{5 + ln(0.08026)}) ;
\draw[thick,gray] (21, {5  + ln(0.08026)}) -- (25,{5 + ln(0.063428)}) ;
\draw[thick,gray] (25, {5  + ln(0.063428)}) -- (29,{5 + ln(0.05004)}) ;
\draw[thick,gray] (29, {5  + ln(0.05004)}) -- (30,{5 + ln(0.04721)}) ;
\node at (27.1,1.8) {k=4,j=3} ;

\draw[thick,violet] (1, {5  + ln(0.23315)}) -- (2,{5 + ln(0.20664)}) ;
\draw[thick,violet] (2, {5  + ln(0.2066)}) -- (3,{5 + ln(0.1820)}) ;
\draw[thick,violet] (3, {5  + ln(0.1820)}) -- (5,{5 + ln(0.1372)}) ;
\draw[thick,violet] (5, {5  + ln(0.1372)}) -- (9,{5 + ln(0.07817)}) ;
\draw[thick,violet] (9, {5  + ln(0.07817)}) -- (14,{5 + ln(0.03896)}) ;
\draw[thick,violet] (14, {5  + ln(0.03896)}) -- (18,{5 + ln(0.02221)}) ;
\draw[thick,violet] (18, {5  + ln(0.02221)}) -- (23,{5 + ln(0.01119)}) ;
\draw[thick,violet] (23, {5  + ln(0.01119)}) -- (24,{5 + ln(0.0098)}) ;
\node at (24.1,0.2) {k=6,j=4} ;

\node at (15,-0.7) {Number of iterations} ;
\node at (-4.3, 1.6) {s.d. of iterate} ;
\node at (-4.9,1.28) {(log scale)} ;

\end{tikzpicture}

\caption{The geometric decrease in s.d. for some $U[0,1]$ models.  The lines pass through the actual simulated values, without being fitted.}
\label{Fig:sd1}
\end{figure}

One can imagine analogous scaling limits near $0$ and $1$ for pairs $(j,k)$ for which the iterates converge to $\delta_{0,1}$.

 \begin{figure}[ht]
 \begin{center}
\includegraphics[width=2.5in]{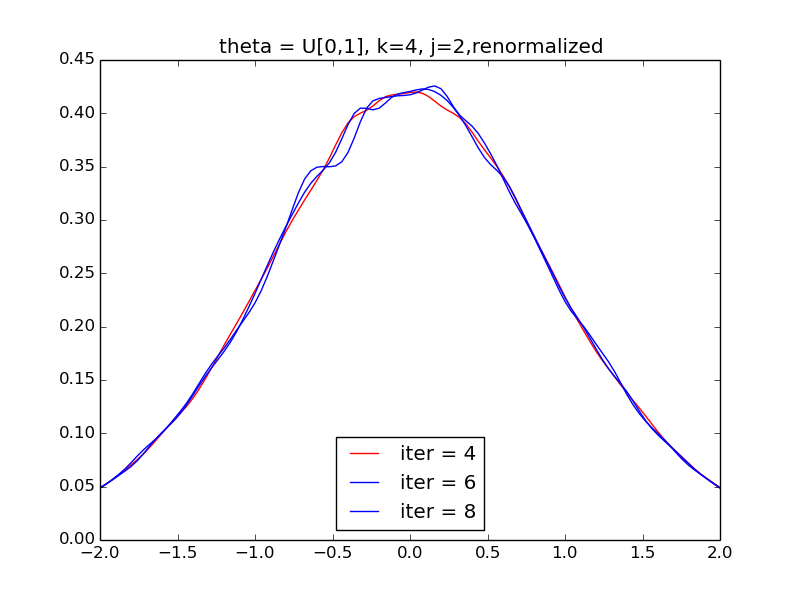}
\end{center}
\caption{Renormalized iterates from $U[0,1]$; \   $k = 4, j = 2$ strongly suggest a scaling limit.}
\label{Fig:U5}
\end{figure}


 \subsection{Non-uniform initial distributions.}
 \label{sec:non-u}
Some aspects of the observed behavior above might be consequences of symmetry (about $1/2$) of the initial distribution $U[0,1]$.
So here we study non-symmetric initial distributions.

First consider the slightly ``tilted" distribution $T[0,1]$ with density $f(u) = \frac{1}{2} + u$.
We find that the behavior remains qualitatively unchanged; compare Figure \ref{Fig:T1} below with Figures \ref{Fig:U1} and \ref{Fig:U2} above. 
Convergence to $\delta_{1/2}$ is replaced by convergence to some $\delta_s$ where $s$ depends on $(j,k)$.  
Convergence to $\delta_{0,1}$ is unchanged: Figure \ref{Fig:T1} shows that asymmetry persists through at least 5 iterations, but the probability masses 
near each endpoint tend to $1/2$ in the limit, consistent with the result that (for this case $k = 4$) the only invariant distribution supported on $\{0,1\}$ is the uniform distribution. 
 Figure \ref{Fig:T2} shows similar behavior  for the ``more tilted" distribution $TT[0,1]$ with density  $f(u) = 2u$.

 \begin{figure}[ht]
\includegraphics[width=2.3in]{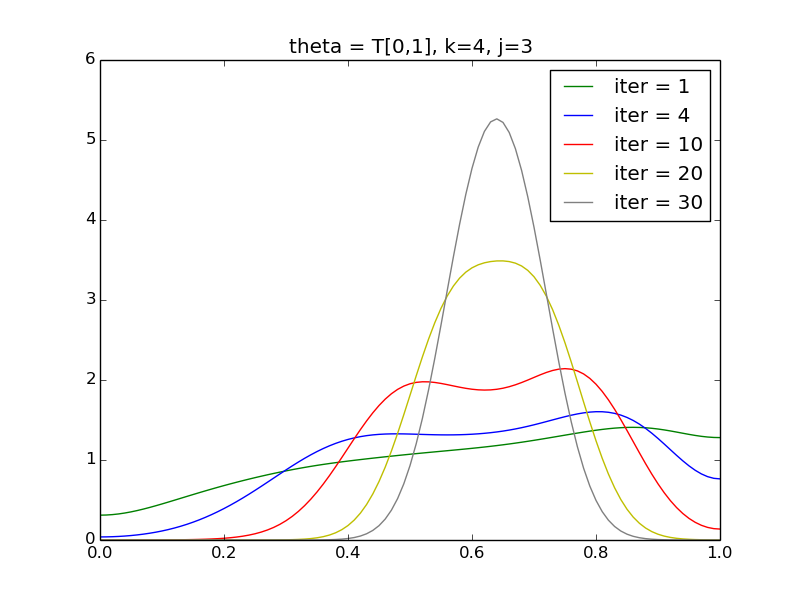}   \hspace*{-0.2in}
\includegraphics[width=2.3in]{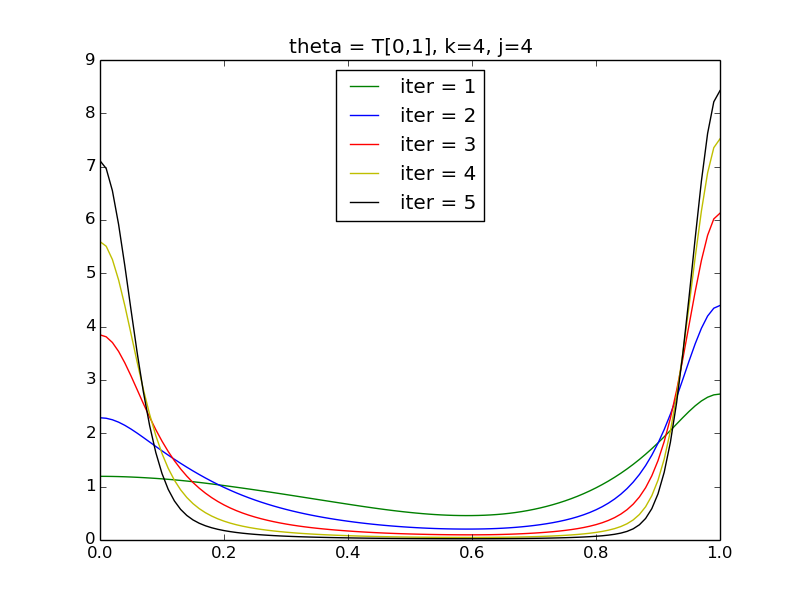} 
\caption{Iterates on the unit interval from a slightly tilted initial distribution, $k = 4, j = 3,4.$}
\label{Fig:T1}
\end{figure}

 \begin{figure}[ht]
\includegraphics[width=2.3in]{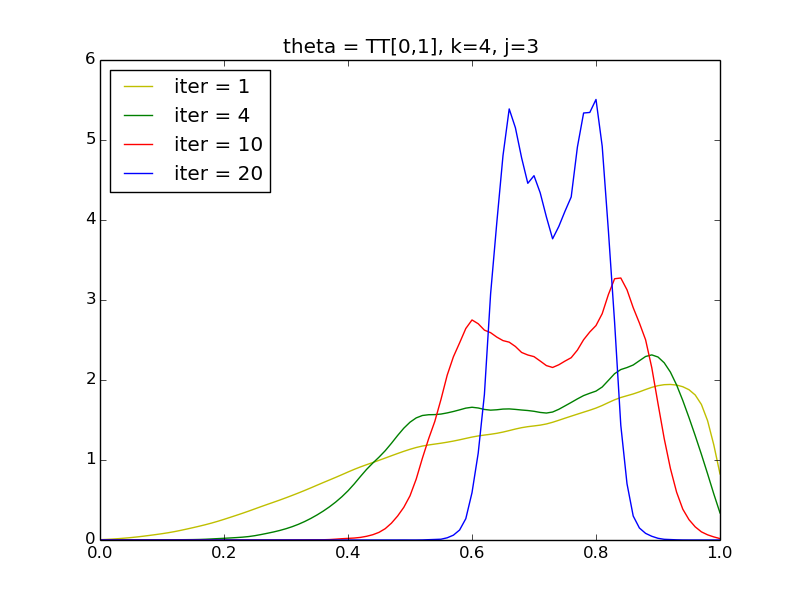}   \hspace*{-0.2in}
\includegraphics[width=2.3in]{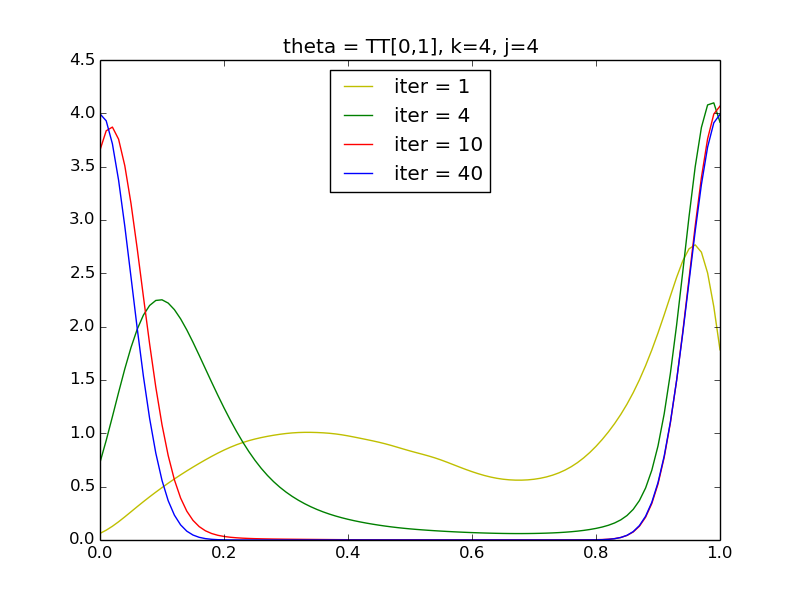} 
\caption{Iterates on the unit interval from a more tilted initial distribution, $k = 4, j = 3,4.$}
\label{Fig:T2}
\end{figure}

\subsection{Rigorous results for $S = [0,1]$}
\label{sec:rig}
The previous sections  stated ``results" about $\pi_{j,k}$ which we confidently believe via simulations, but without proofs.  
At a rigorous level, the key open questions are
\begin{itemize}
\item Does there exist (for any $(j,k)$) any invariant distribution with full support?
\item Does there exist (for any $(j,k)$) any distribution other than $\delta_s$ or $\delta_{0,1}$  that occurs as a limit of iterates from some initial distribution with full support?
\end{itemize}
We suspect the answer to each is ``no".
Of course, ``no" to the second question would imply ``no" to the first question.

A natural way to start a study is to write down the identity 
for an invariant density $f_{j,k}(x)$.
But because we are seeking to prove non-existence,
we seek to prove there is no solution.
Our partial results involve considering the density at endpoints; 
the proof in  \cite{mappings_short} of Theorem  \ref{P:0122} was constructed by first considering the density case and then re-writing the argument for general distributions.

Let us discuss in detail only the simplest case, that is $k = 2$. 
Recall from Theorem \ref{P:k=2} that the invariant distributions for $\pi_{2,2}$ and $\pi_{1,2}$ are the same.

\begin{Theorem}[ \cite{mappings_short} Theorem 4]:
\label{P:0122}
 {\em There are no $\pi_{1,2}$ or $\pi_{2,2}$-invariant distributions on 
 the interval $[0,1]$ other than the omnipresent ones.}
\end{Theorem}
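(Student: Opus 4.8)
The plan is to invoke Theorem \ref{P:k=2} to reduce at once to $\pi_{1,2}$, and then to rule out any full-support invariant $\phi$ on $[0,1]$ by an endpoint argument. I would first treat the case where $\phi$ has a continuous, strictly positive density $f$ (with CDF $F$); this already carries the main idea, and the general-measure case is obtained afterwards by rewriting everything through $F$. Note also that an invariant measure whose support is a subinterval $[c,d]$ rescales, via an affine map that preserves the purely comparison-based nearest-point dynamics, to a full-support invariant measure on $[0,1]$, so it suffices to exclude full support.

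The starting point is to record stationarity in usable form. From state $s$ the nearest-of-two chain jumps with density $2f(y)\,g(s,y)$, where $g(s,y)=\Pr_{Z\sim\phi}(|Z-s|>|y-s|)$ is the chance that a fresh sample is farther from $s$ than $y$ is (there are no ties since $f$ is a density). Writing $\Phi(y)=\int_0^1 f(s)\,g(s,y)\,ds=\Pr_{S,Z\sim\phi}(|y-S|<|Z-S|)$, invariance $\phi K=\phi$ collapses to
\[
\Phi(y)=\tfrac12 \quad \text{for all } y\in[0,1],
\]
so every support point is a \emph{distance-median}: as likely to be nearer an independent sample $S$ than a fresh sample $Z$ is, as not. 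Splitting the integral at $s=y$ and using $g(s,y)=F(y)+1-F(2s-y)$ for $y<s$ (and the reflected expression for $y>s$) writes $\Phi$ explicitly in terms of $F$.

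Next I would exploit that $\Phi\equiv\tfrac12$ on the whole interval by differentiating. A direct Leibniz computation gives
\[
\Phi'(y)=f(y)\,(1-2F(y))+\int_y^1 f(s)f(2s-y)\,ds-\int_0^y f(s)f(2s-y)\,ds,
\]
so $\Phi'(y)=0$ on $(0,1)$. Letting $y\downarrow 0$ (where $F(0)=0$) the last integral vanishes and the middle one tends to $\int_0^{1/2}f(s)f(2s)\,ds$, yielding
\[
f(0^+)=-\int_0^{1/2} f(s)f(2s)\,ds .
\]
The left side is nonnegative and the right side nonpositive, so both vanish; in particular $f(s)f(2s)=0$ for a.e. $s\in(0,\tfrac12)$, which is impossible when $f>0$ on $(0,1)$. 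This contradiction (together with the symmetric one at $y=1$) rules out positive-density invariant distributions.

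The remaining work, which I expect to be the main obstacle, is to run the same argument for a general invariant $\phi$, where differentiation is unavailable and atoms may occur. Here I would replace the pointwise derivative by an integrated finite-difference form of $\Phi(y)=\tfrac12$ anchored at $a=\inf\support(\phi)$: for $y$ just above $a$ the event $\{|y-S|<|Z-S|\}$ is compared with its value at $a$ through the behaviour of $F$ immediately to the right of $a$, reproducing the same one-signed balance and hence the same contradiction whenever mass accumulates at $a$ from the right. One must then separately dispose of purely atomic invariant measures on three or more collinear points, for which $a$ is isolated and the limiting step degenerates; this I would handle by the finite-space bookkeeping of Section \ref{sec:finite} applied to points of $[0,1]$, or by a direct ordering argument showing that only the two extreme atoms can persist, leaving precisely $\delta_s$ and $\delta_{s_1,s_2}$ as the survivors.
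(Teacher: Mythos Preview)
Your approach matches the paper's described route: reduce to $\pi_{1,2}$ via Theorem~\ref{P:k=2}, write the invariance identity for a density, differentiate, and obtain a contradiction at the left endpoint (your relation $f(0^{+})=-\int_0^{1/2} f(s)f(2s)\,ds$ is exactly the ``contradiction at $f(0)$'' the paper alludes to), then rewrite the argument through $F$ for general measures. One small caveat: the reduction ``it suffices to exclude full support'' presumes the support is a subinterval, which it need not be (it could be a finite set or a Cantor-like set); however, since your general-measure sketch anchors at $a=\inf\support(\phi)$ rather than at $0$, you do not actually depend on that reduction, and the plan is sound.
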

 The proof consists of analyzing the equation for an invariant density $f(x)$  to obtain a contradiction at $f(0)$.
A more elaborate argument (omitted) 
extends the result to some other values of $(j,k)$, as follows.

 {\em There is no $\pi_{j,k}$-invariant density on $[0,1]$ if $j = k$ or if
 \[
 \begin{pmatrix}
 &k& \\ j-1 & 1 & k-j
 \end{pmatrix}
 \ \frac{(j-1)^{j-1} (k-j)^{k-j}}{(k-1)^{k-1} } \le 2
 . \]
 }


\subsection{Discrete points within $[0,1]$}
Somewhat relevant to the open problems at the start of section \ref{sec:rig} is the following discrete example.
\begin{Proposition}
The uniform distribution on the 4 points ${0, 0.4, 0.6, 1}$ is invariant for $\pi_{3,4}$.
\end{Proposition}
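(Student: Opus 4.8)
The plan is to verify directly that the uniform distribution $\theta$, with $\theta(s)=\tfrac14$ at each of the four points, is stationary for the chain $\bX^{\theta,3,4}$; uniqueness in Theorem~\ref{T:1} then forces $\pi_{3,4}(\theta)=\theta$, which is exactly invariance. I label the points $a=0,\ b=0.4,\ c=0.6,\ d=1$. The first step is to record that \emph{from each fixed point} the distances to all four points are pairwise distinct (e.g.\ from $b$ they are $0.4,0,0.2,0.6$), even though the six global distances are not all distinct. Consequently no step of the chain ever produces a tie between two \emph{distinct} points, so the tie-breaking rule is irrelevant here (coincident samples are the same point and cause no ambiguity), and the chain simply jumps to the point whose distance-rank from $s$ equals the third order statistic of the sampled ranks.

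The key observation I would exploit is a symmetry of the sampling. From every $s$ the four points realize the ranks $1,2,3,4$ each exactly once, and each point is sampled with probability $\tfrac14$; hence the $k=4$ sampled ranks are i.i.d.\ uniform on $\{1,2,3,4\}$ \emph{independently of $s$}. Therefore the distance-rank of the third-closest sample has a universal law $q_r:=\Pr(R_{(3)}=r)$, where $R_{(3)}$ is the third order statistic of four i.i.d.\ Uniform$\{1,2,3,4\}$ variables. I would compute $q$ from $\Pr(R_{(3)}\le r)=\Pr(\Bin(4,r/4)\ge 3)$, obtaining $q=\tfrac{1}{256}(13,67,109,67)$. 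The transition kernel is then simply $K(s,t)=q_{R(s,t)}$, where $R(s,t)$ is the rank of $t$ as seen from $s$, encoded by the rank matrix
\[
R=\begin{pmatrix}1&2&3&4\\3&1&2&4\\4&2&1&3\\4&3&2&1\end{pmatrix}
\]
with rows and columns indexed $a,b,c,d$.

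Since $\theta$ is uniform, the stationarity equation $\theta K=\theta$ is equivalent to $K$ being doubly stochastic, i.e.\ $\sum_s q_{R(s,t)}=1$ for each target $t$. Reading off the columns of $R$, the column for $t\in\{a,d\}$ is the multiset $\{1,3,4,4\}$ and the column for $t\in\{b,c\}$ is $\{1,2,2,3\}$; the two pairs are exchanged by the reflection $x\mapsto 1-x$, so Lemma~\ref{L:sym} already halves the checking. The two column sums are $q_1+q_3+2q_4$ and $q_1+2q_2+q_3$, and since $q_1+q_2+q_3+q_4=1$ each equals $1$ precisely when $q_2=q_4$. This identity is the crux of the whole argument, and it holds here since $q_2=q_4=67/256$. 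Hence $K$ is doubly stochastic, $\theta$ is stationary, and uniqueness gives $\pi_{3,4}(\theta)=\theta$.

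The computation is elementary, so there is no deep obstacle; the only points demanding care are (i) confirming that sampling with replacement creates no genuine ties and that the jump target is correctly identified with the order-statistic rank, and (ii) verifying the single balance identity $q_2=q_4$ on which doubly-stochasticity — and hence the entire claim — hinges.
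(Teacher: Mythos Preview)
Your proof is correct and follows essentially the same route as the paper: compute the transition matrix of $\bX^{\theta,3,4}$ and verify that it is doubly stochastic, whence the uniform distribution is stationary and Theorem~\ref{T:1} gives $\pi_{3,4}(\theta)=\theta$. The paper simply displays the matrix $\tfrac{1}{256}\bigl(\begin{smallmatrix}13&67&109&67\\109&13&67&67\\67&67&13&109\\67&109&67&13\end{smallmatrix}\bigr)$ and remarks that it is doubly stochastic; your organization via the order-statistic law $q=(13,67,109,67)/256$ and the single identity $q_2=q_4$ is a cleaner explanation of \emph{why} the column sums balance, but it is the same calculation.
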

This is true by calculation: the transition matrix is 
\[
\frac{1}{256} \ 
\begin{pmatrix}
 13 &67& 109& 67\\
109& 13& 67 &67 \\
67& 67 &13 &109 \\
67& 109 &67 &13
\end{pmatrix}
\]
which is doubly-stochastic (like a variant of a Latin square).
But it is not clear if any analog holds for larger point sets.

 \section{The circle}
 \label{sec:circle}
 Here we take $S$ to be the unit circle $\CC = [0,1)$.

 \subsection{The uniform distribution}
 On the circle, the uniform distribution $U[\CC]$ is invariant, by symmetry.
 However, when we do Monte Carlo simulations of iterates, the symmetry breaks, as shown in  Figure \ref{Fig:C11}.
 We interpret this as implying that the uniform distribution is an {\em unstable} fixed point.
 
 In detail, the figure shows two realizations of each of the four cases ($j = 1,2,3,4$) with $k=4$.
 For each $j$, different realizations are qualitatively similar.
 The symmetry is broken  by the emergence of ``waves" 
with some number  $\nu = \nu(j,k)$ of peaks evenly spread around the circle.
The time (number of iterations) of emergence is similar in different realizations. 
In simulations  one wave eventually one takes over, as shown in Figure \ref{Fig:C12}.

Implications for the true iterative process are elusive.
Recall that by symmetry (section \ref{sec:symmetry}) the uniform distribution on any number
$\nu$ of points evenly spread around the circle (say $\theta_\nu$)  is invariant.
If a non-uniform initial distribution has some symmetry property, such as invariant under a $1/\nu$'th fraction of one rotation\footnote{For the maximum such $\nu$.},  then 
by the general ``preservation of symmetry" result (Lemma \ref{L:sym}) 
 that symmetry property  persists in the iterations and hence in any limit.
 So it is natural to conjecture that the iterative process started from such a distribution will converge to $\theta_\nu$.
However, the significance for the iterative process of the numbers $\nu(j,k)$ observed in initial iterates in simulations is elusive.

 \begin{figure}[p]
\includegraphics[width=2.3in]{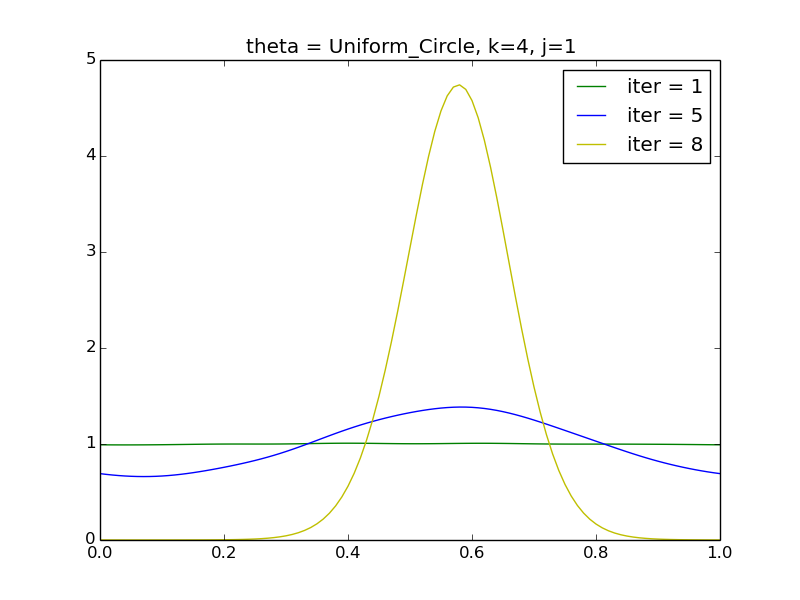}   \hspace*{-0.2in}
\includegraphics[width=2.3in]{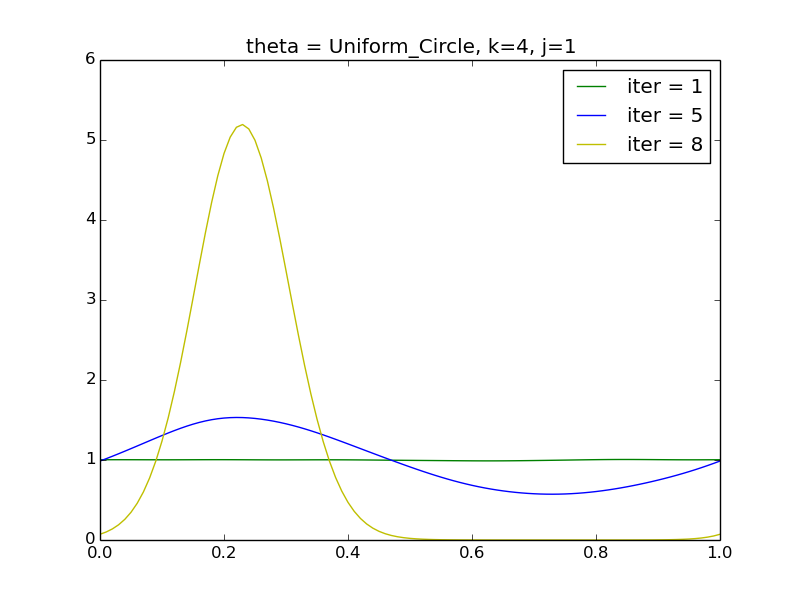} 

\includegraphics[width=2.3in]{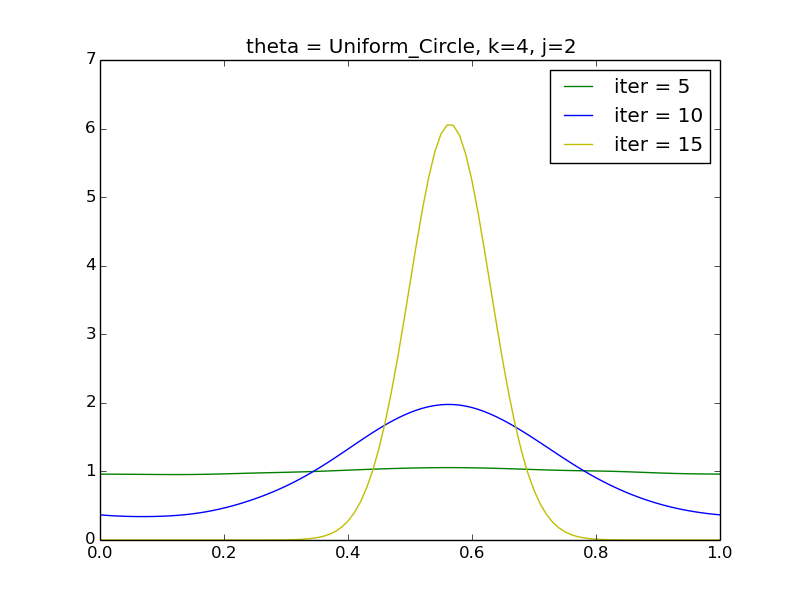}   \hspace*{-0.2in}
\includegraphics[width=2.3in]{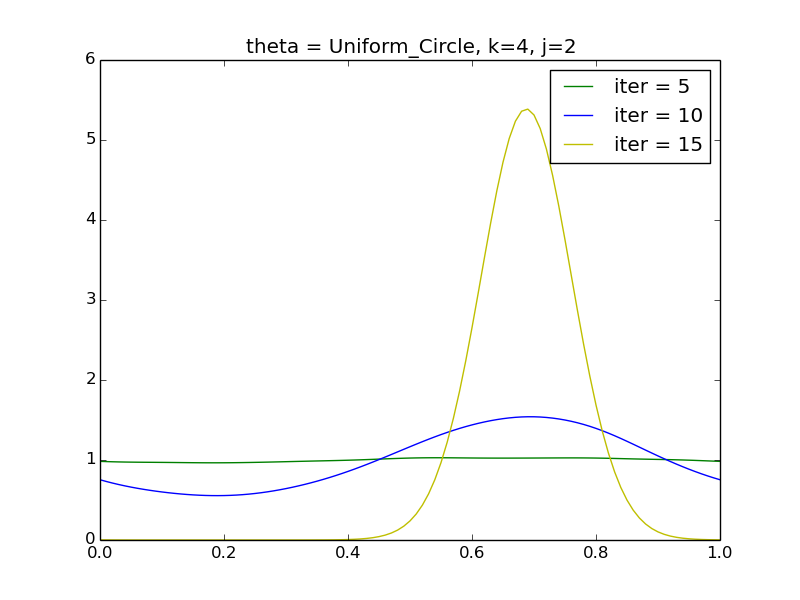} 

\includegraphics[width=2.3in]{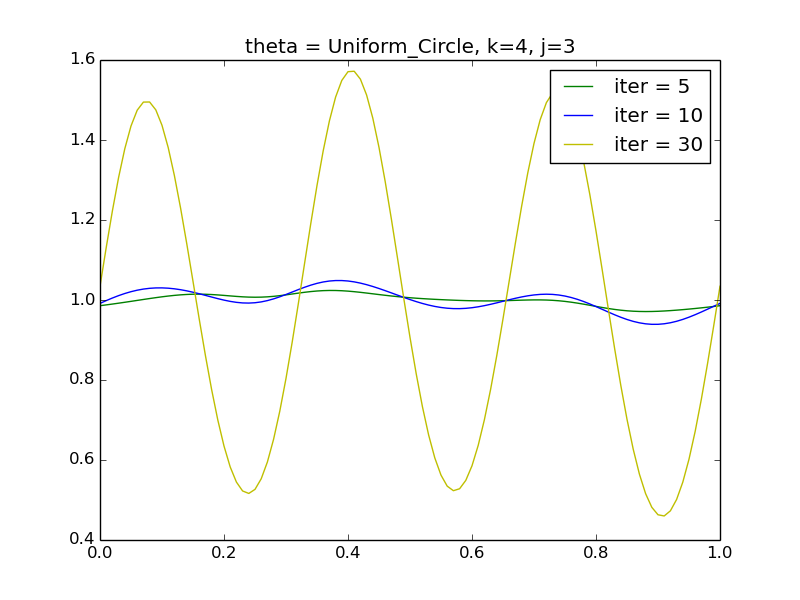}   \hspace*{-0.2in}
\includegraphics[width=2.3in]{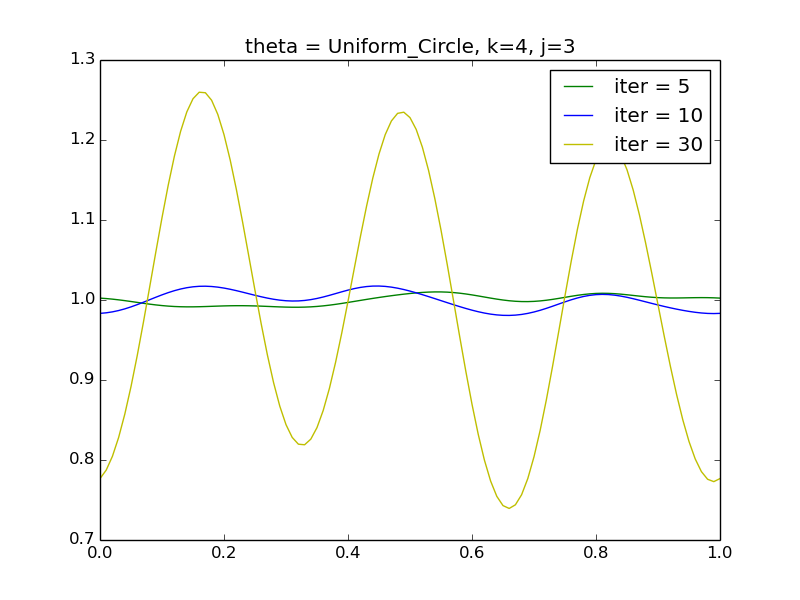} 

\includegraphics[width=2.3in]{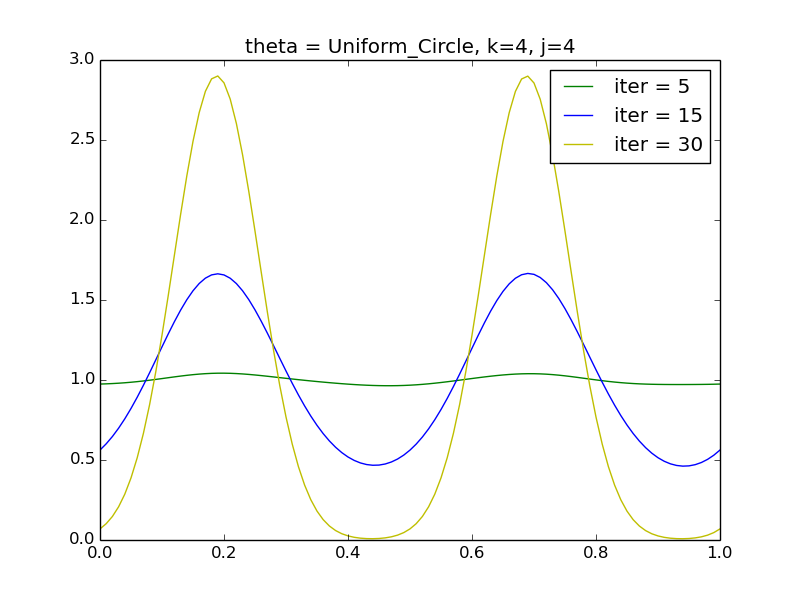}   \hspace*{-0.2in}
\includegraphics[width=2.3in]{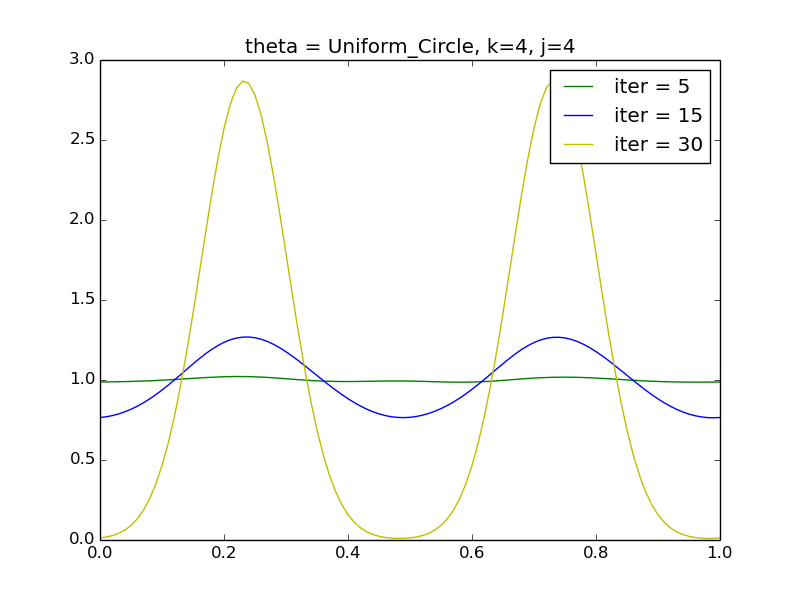} 

\caption{Iterates for $U[\CC];  \ k = 4, j = 1, 2, 3, 4$}
\label{Fig:C11}
\end{figure}

 \begin{figure}[ht]
\includegraphics[width=2.3in]{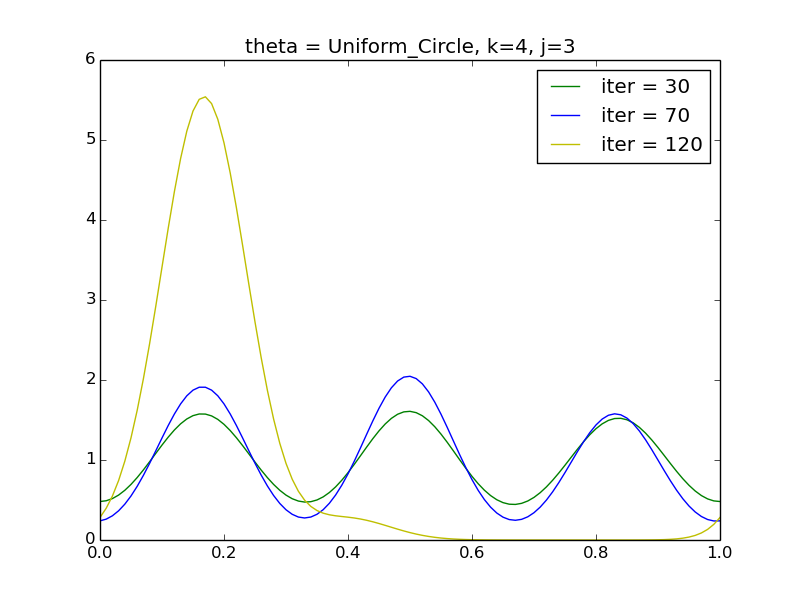}   \hspace*{-0.2in}
\includegraphics[width=2.3in]{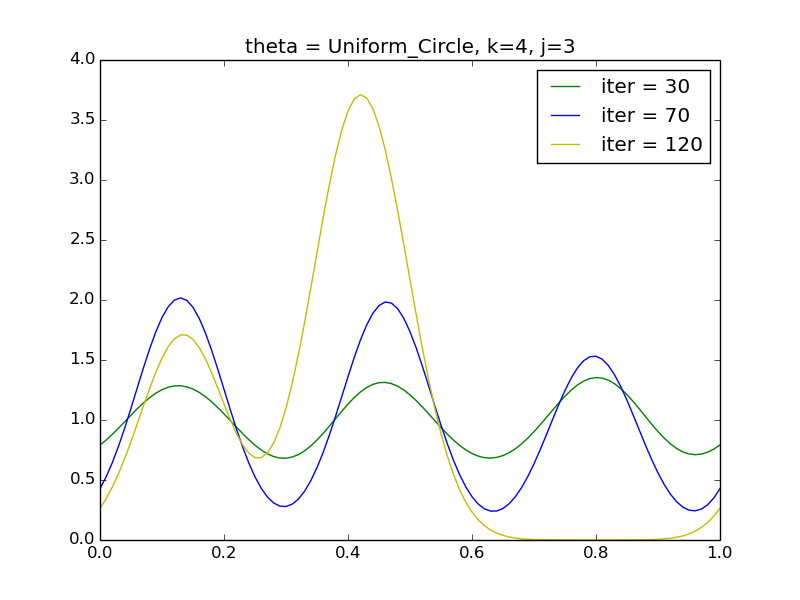} 
\caption{Iterates for $U[\CC]$, long-term: $k = 4, j = 3.$}
\label{Fig:C12}
\end{figure}

\subsection{Non-uniform initial distributions}
We have only done brief investigations of iterates on $\CC$ from non-symmetric starting distributions.
It appears that behavior here corresponds to  the case of the unit interval, in that
limits are either $\delta_s$ or $\delta_{s_1,s_2}$ where $s_1$ and $s_2$ are opposite points of $\CC$.
Figure \ref{Fig:CT12} shows the case of the  discontinuous initial density 
\begin{equation}
f(t) = t + 0.5, 0 < t < 1.
\label{disc-density}
\end{equation}

 \begin{figure}[ht]
\includegraphics[width=2.3in]{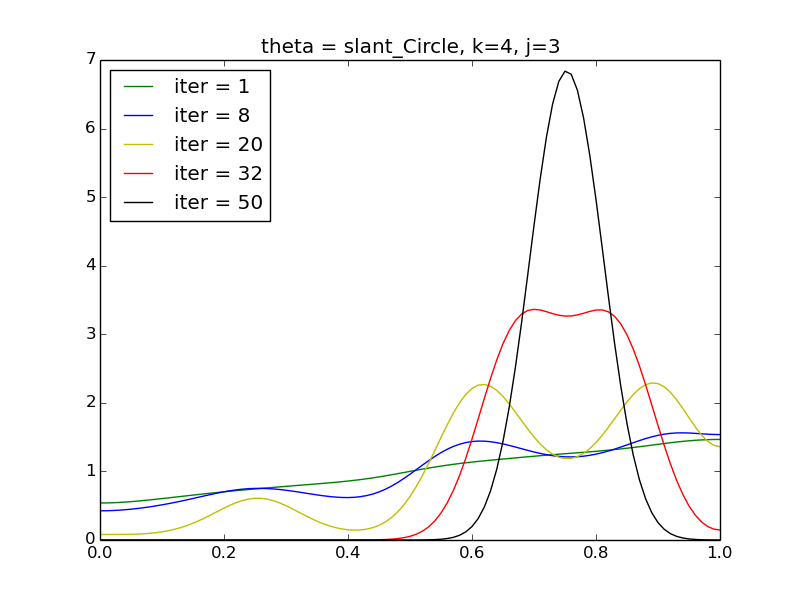}   \hspace*{-0.2in}
\includegraphics[width=2.3in]{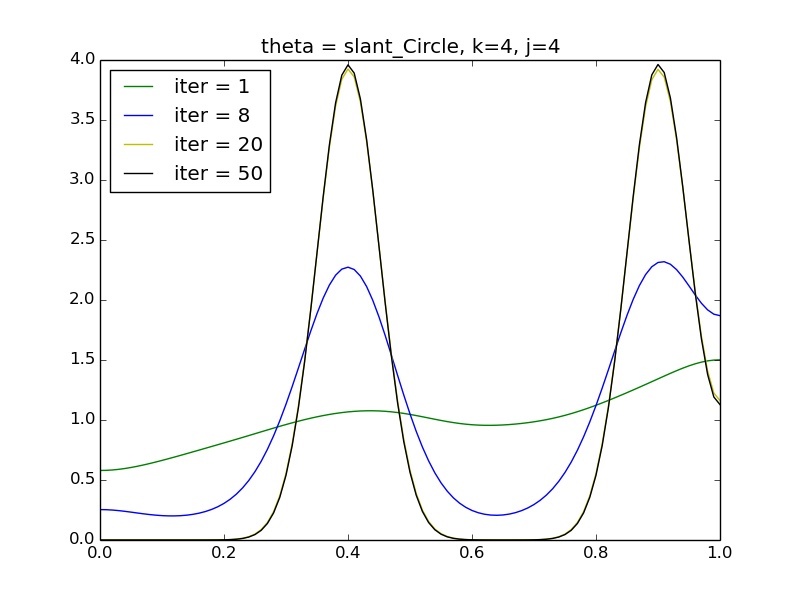} 
\caption{Iterates on $\CC$ from initial density (\ref{disc-density}), $k = 4, j = 3, 4.$}
\label{Fig:CT12}
\end{figure}

\section{A high-dimensional example}
\label{sec:highdim}
We wish to study a high-dimensional example without any special symmetry.
Here is our rather arbitrary choice.

Take $S = [0,1]^{10}$ with elements $\bx = (x_i)$ and with non-Euclidean metric
\begin{equation}
d(\bx,\by) = \sum_i \beta^i |x_i - y_i|  
\label{10-dim}
\end{equation}
for a parameter $0 < \beta < 1$.
And take initial distributions with density of the form
\[ f(\bx) \propto  \exp( - \alpha \sum_i x_i^2)    \]
for a parameter $\alpha > 0$.

Figure \ref{Fig:10-dim} shows simulations with $k = 2$.
What is shown is the distribution of ``distance to $\mathbf{0}$" after varying numbers of iterations.
The 8 panels show the 8 combinations of 
\[\alpha = 0.1 \mbox{ or } 1.0; \ \beta = 0.7 \mbox{ or } 0.9; \ j = 1 \mbox{ or }  2 .\]
The qualitative behavior is similar to previous examples: for $j = 1$ there is apparent convergence to some $\delta_{\bx}$ and 
for $j = 2$ we suspect there is convergence to $\delta_{\mathbf{0}, \mathbf{1}}$.


 \begin{figure}[p]
\includegraphics[width=2.3in]{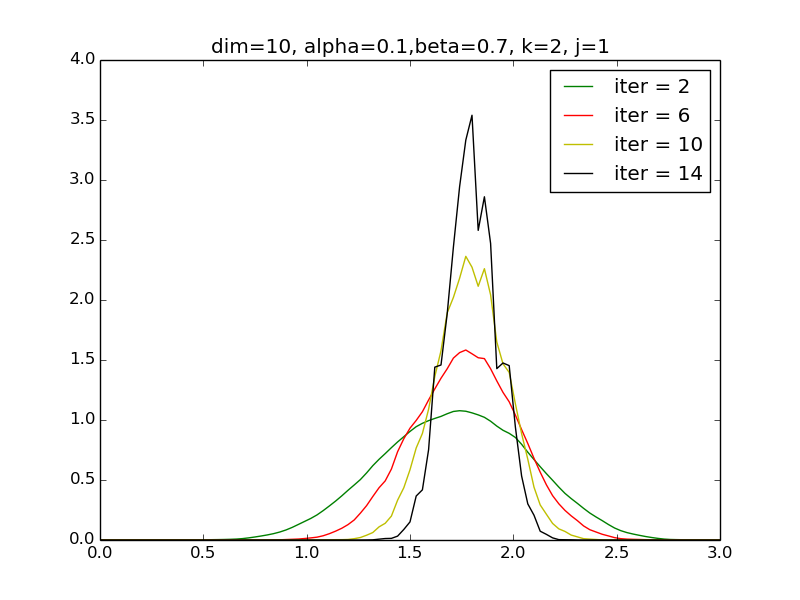}   \hspace*{-0.2in}
\includegraphics[width=2.3in]{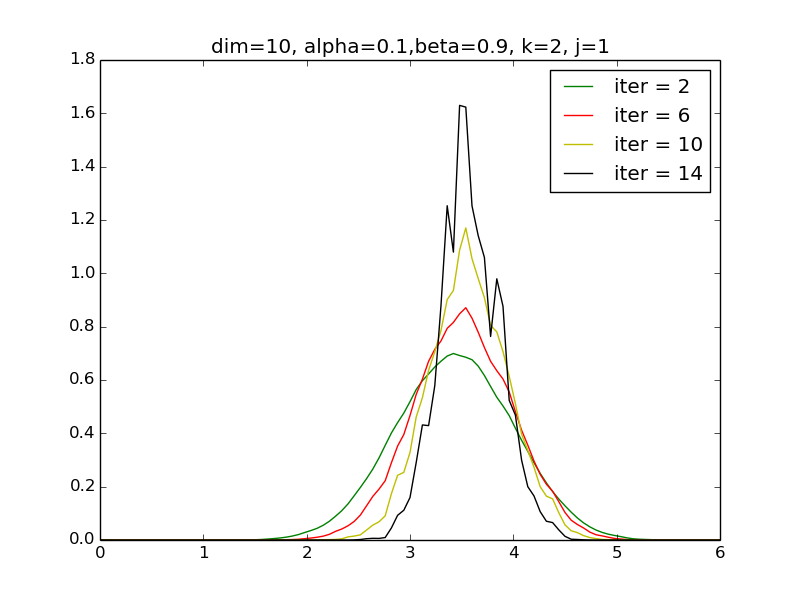} 

\includegraphics[width=2.3in]{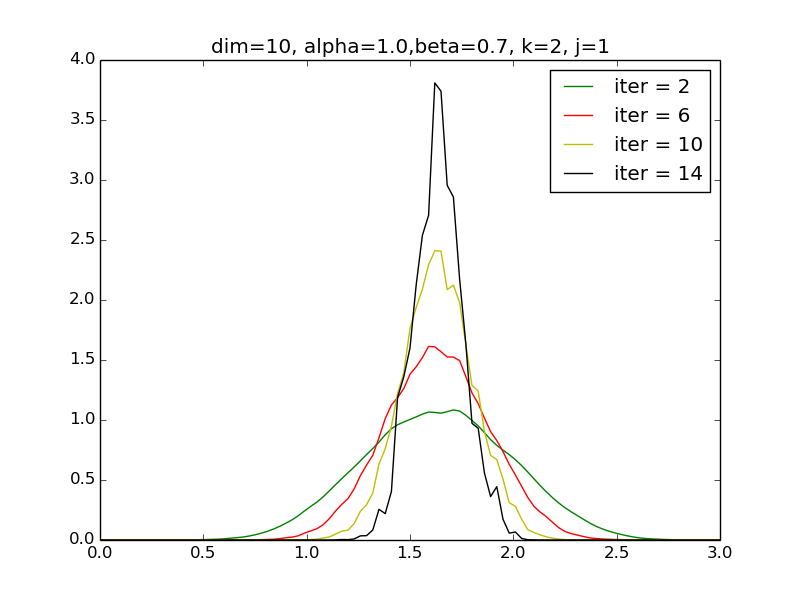}   \hspace*{-0.2in}
\includegraphics[width=2.3in]{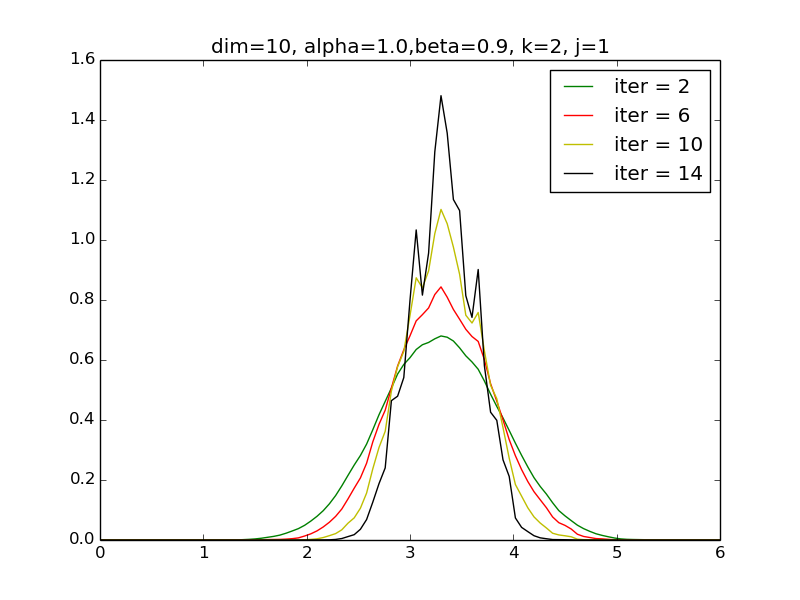} 

\includegraphics[width=2.3in]{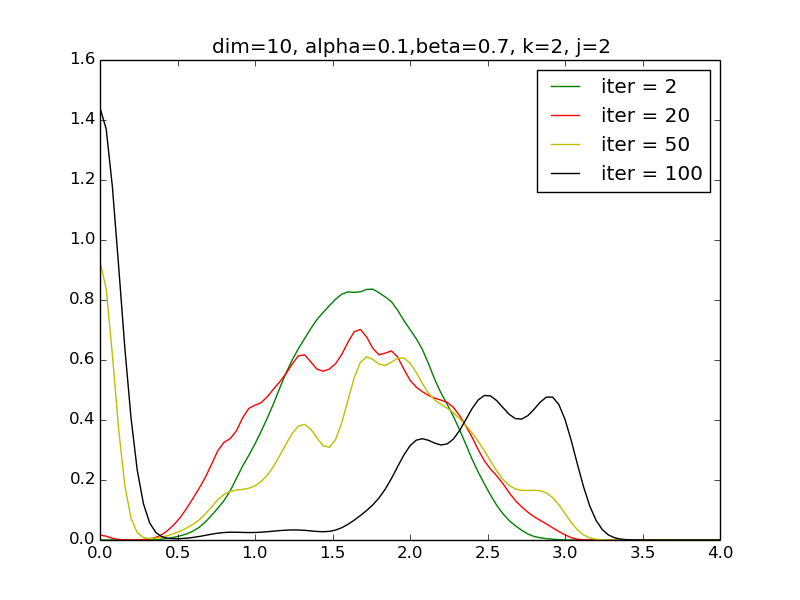}   \hspace*{-0.2in}
\includegraphics[width=2.3in]{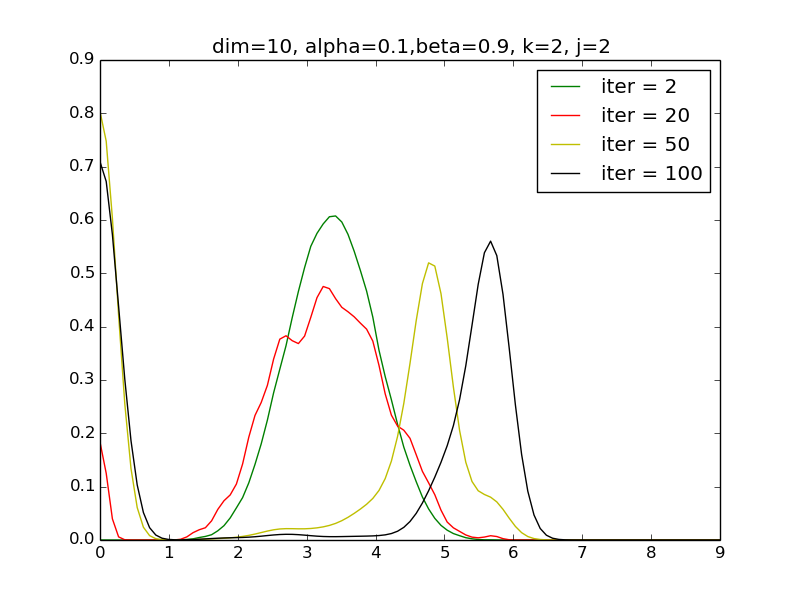} 

\includegraphics[width=2.3in]{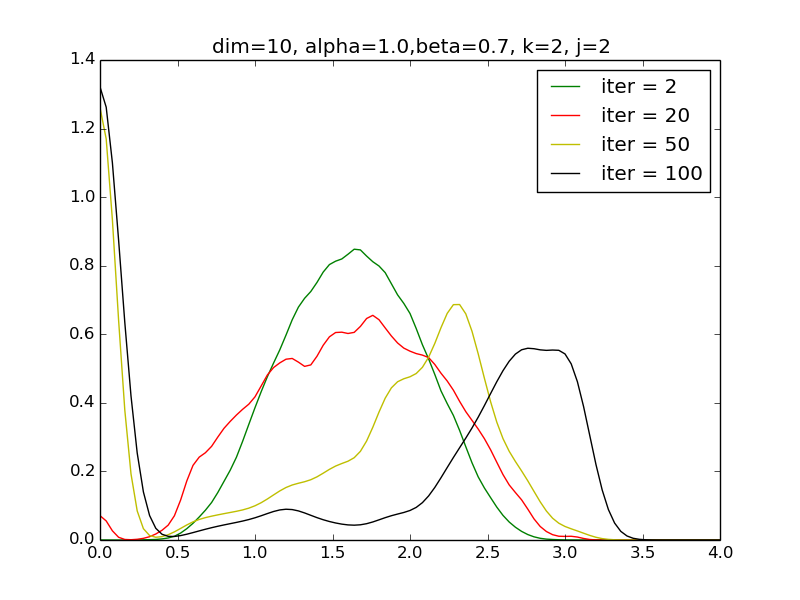}   \hspace*{-0.2in}
\includegraphics[width=2.3in]{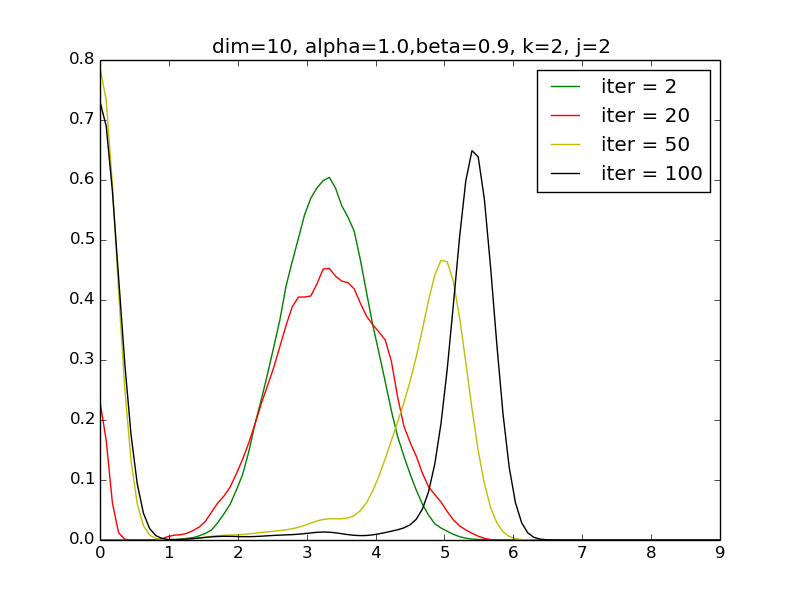} 
\caption{The 10-dimensional example (\ref{10-dim}); $k = 2.$}
\label{Fig:10-dim}
\end{figure}

\section{Final remarks}
\label{sec:34}

{\bf 1:  The $j \approx 3k/4$ heuristic.}
The heuristics  in section \ref{sec:U01} in the context of the unit interval can be repeated more generally.
Suppose $k$ is large, and suppose $S$ and $(j,k)$ are such that there is an initial tendency for the iterate distributions to concentrate near two distinct regions.
Then about half of the $k$ samples will be in each region, so for $j > k/2$
the chain will switch back and forth between the regions.
And the average distance between regions in successive iterations
will tend to increase or decrease according to whether the chosen point at each step of the chain is farther (from the other region) or nearer than the average, that is
according to whether $j > 3k/4$ or $j < 3k/4$.

{\bf 2.} 
The only rigorous analyses of continuous $S$ that we have completed involve explicit calculations.
To give rigorous proofs of, for instance, convergence of iterates to some $\delta_s$, 
it seems natural to investigate some kind of monotonicity property for concentration of distributions,
but we have been unable to implement that idea.

{\bf 3.}
This article studies a certain process which can be defined on any compact metric space $S$, parametrized by any $\theta \in \PP(S)$.
Are there other interesting such processes?
As one example, a certain such {\em coverage process} is studied in  \cite{me157}: ``seeds" arrive as a Poisson process in time at i.i.d. ($\theta$) locations and form the center of growing balls.


\end{document}